\DeclareMathOperator*{\Argmax}{Arg\, max}
\DeclareMathOperator*{\ext}{ext}
\DeclareMathOperator*{\cl}{cl}
\DeclareMathOperator*{\Limsup}{Lim\,sup}
\DeclareMathOperator*{\lspan}{span}
\DeclareMathOperator*{\inte}{int}
\DeclareMathOperator*{\relint}{relint}
\DeclareMathOperator*{\conv}{conv}
\DeclareMathOperator*{\cone}{cone}
\DeclareMathOperator{\Proj}{\Pi}
\DeclareMathOperator*{\mT}{Tangent}
\DeclareMathOperator{\mD}{Dir}
\DeclareMathOperator*{\mN}{Normal}
\DeclareMathOperator*{\face}{face}
\newcommand{\E}{\mathbb{E}}
\newcommand{\N}{\mathbb{N}}
\newcommand{\R}{\mathbb{R}}
\newcommand{\cK}{\mathcal{K}}
\newcommand{\cT}{\mathcal{T}}
\newtheorem{proposition}{Proposition}
\newtheorem{theorem}{Theorem}
\theoremstyle{definition}
\newtheorem{example}{Example}
\newcommand{\iprod}[2]{\left\langle {#1}, {#2} \right\rangle}
\newcounter{claim_nb}[theorem]
\begin{document}

\title[Facially dual complete cones, lexicographic tangents]{Facially Dual Complete (Nice) cones\\ and \\lexicographic tangents}

\author{Vera Roshchina
\and Levent Tun\c{c}el}\thanks{Vera Roshchina: (v.roshchina@unsw.edu.au)  School of Mathematics and Statistics, University of New South Wales, Australia; much of the work on this paper was done while this author was also affiliated with RMIT University and Federation University Australia.
Research of this author was supported by the Australian Research Council
(Discovery Early Career Researcher Award DE150100240).\\
Levent Tun\c{c}el: (ltuncel@uwaterloo.ca)
Department of Combinatorics and Optimization, Faculty of
Mathematics, University of Waterloo, Waterloo, Ontario N2L 3G1,
Canada. Research of this author was supported in part by Discovery
Grants from NSERC and by U.S. Office of Naval Research under award numbers: N00014-12-1-0049 and
N00014-15-1-2171.}

\begin{abstract}
We study the boundary structure of closed convex cones, with a focus on
facially dual complete (nice) cones. These cones form a proper subset of facially exposed convex cones,
and they behave well in the context of duality theory for convex optimization. Using the well-known
and commonly used concept of tangent cones in nonlinear optimization,
we introduce some new notions for exposure of faces of convex sets.  Based on these new notions, we obtain
a necessary condition and a sufficient condition for a cone to be facially dual complete. In our sufficient
condition, we utilize a new notion called lexicographic tangent cones (these are a family of cones
obtained from a recursive application of the tangent cone concept).
Lexicographic tangent cones are related to Nesterov's lexicographic derivatives and
to the notion of subtransversality in the context of variational analysis.
\end{abstract}

\date{April 20, 2017, revised: December 28, 2018}

\keywords{convex cones, boundary structure, duality theory, facially dual complete, facially exposed, tangent cone, lexicographic tangent}

\subjclass{52A15, 52A20, 90C46, 49N15}

\maketitle

\section{Introduction}

Understanding the facial structure of convex cones as it relates to the dual cones
is fundamentally useful in convex optimization and analysis.  Let $K$ be a closed
convex cone in a finite dimensional Euclidean space $\E$. For a given scalar
product $\iprod{\cdot}{\cdot}$, the dual cone is
\[
K^*:= \left\{ s \in \E^*: \iprod{s}{x} \geq 0 \,\,\,\, \forall x \in K \right\},
\]
where $\E^*$ denotes the dual space.
Let $C\subseteq \E$ be a closed convex set. A closed convex subset  $F\subseteq C$ is called a {\em face}
of $C$ if for every $x\in F$ and every $y,z \in C$ such that $x\in (y,z)$, we have $y,z\in F$.
The fact that $F$ is a face of $C$ is denoted by $F\unlhd C$. Observe that the empty set and the set $C$ are both faces of $C$.
Just like other partial orders in this paper, if we write $F \lhd C$, then we mean $F$ is a face of $C$ but is not equal to $C$.
A nonempty face $F\lhd C$ is called {\em proper}. Note that if $K$ is a closed convex cone
and $F \unlhd K$, then $F$ is a closed convex cone.

We say that a face $F$ of a closed convex set $C$ is {\em exposed}\index{exposed face} if there exists a supporting hyperplane $H$ to the set $C$
such that $F = C\cap H$.  Many convex sets have unexposed faces, e.g., convex hull of a torus (see Fig.~\ref{fig:Tablet}).
\begin{figure}[ht]
	\centering
	\includegraphics[height = 90pt]{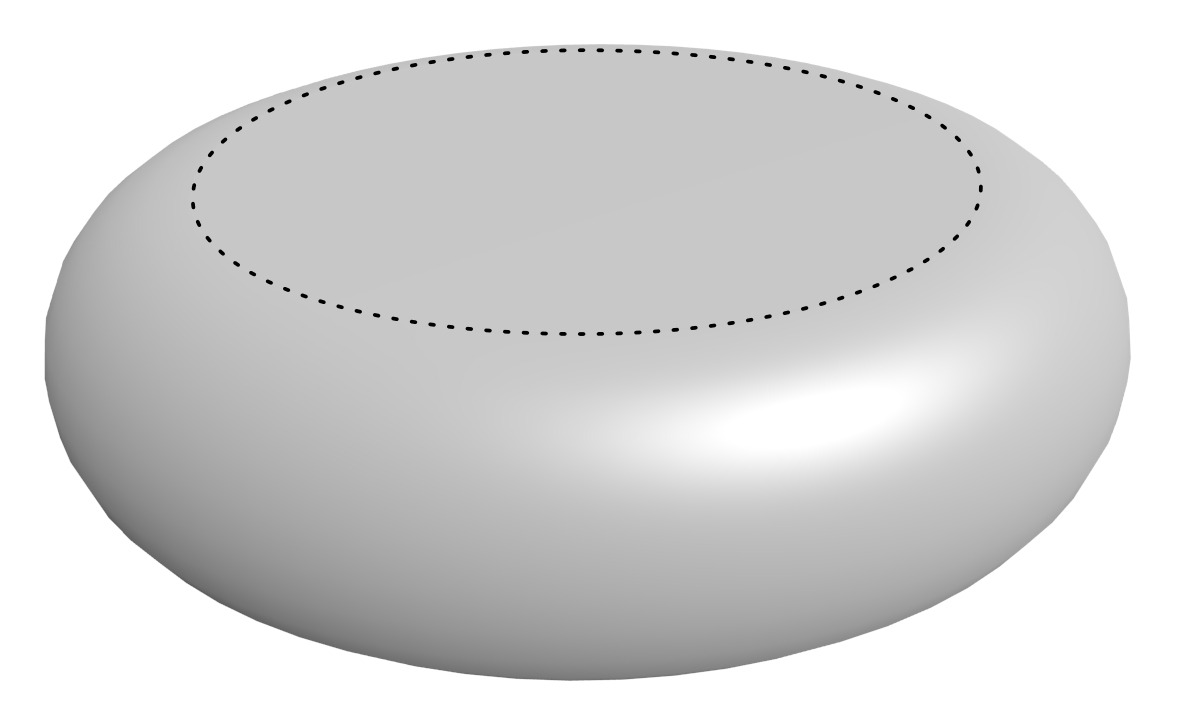}
	\caption{Convex hull of a torus is not facially exposed: the dashed line shows the set the extreme points which are not exposed (see \cite{RockConvAn}).}
		\label{fig:Tablet}
\end{figure}
Another example of a convex set with unexposed faces is the convex hull of a closed unit ball and a disjoint point 
(see for instance \cite{PatakiFexpNice} and Fig.~\ref{fig:nonexp2d} here).
\begin{figure}[ht]
	\centering
\begin{overpic}[scale=1
]{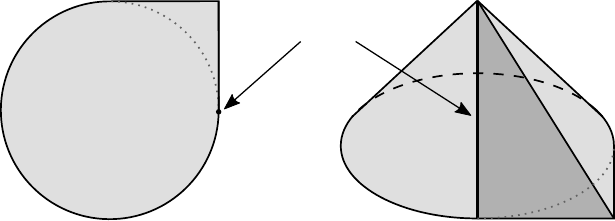}
 \put (42,31) {unexposed faces}
\end{overpic}
	\caption{An example of a two dimensional set and a three dimensional cone that have an unexposed face.}
	\label{fig:nonexp2d}
\end{figure}

 A closed convex set is \emph{facially exposed} if every proper face of $C$ is exposed.
\emph{Facial exposedness} is fundamental in understanding the boundary structure of convex
sets; it even has consequences in the theory of convex representations \cite{ChuaTuncel2008,GPT2013}.
\emph{Symmetric cones} and \emph{homogeneous cones} are facially exposed (see \cite{FK1994,TuncelXu2001,TruongTuncel2004}).
\emph{Hyperbolicity cones} are facially exposed too \cite{Renegar2006}, and they represent a powerful and interesting generalization
of symmetric cones and homogeneous cones for convex optimization
\cite{Guler1997,Renegar2006} and for many other research areas.

Now we turn to another property of faces.  We first motivate the concept and then define it rigorously.
Suppose that for a given family of convex optimization problems in conic form, we know that there is at least an
optimal solution that is contained in a face $F$ of $K$.  We may not have a direct access to the face $F$,
but perhaps we know the linear span of the face $F$: $\lspan(F)$.  Then, to compute an optimal solution, we may replace the
cone constraint $x \in K$, by $x \in \left(K\ \cap \lspan(F)\right)$.  Now, if we write down the dual problem, the dual cone constraint
(for the dual slack variable $s$) becomes (see Proposition~\ref{prop:NormalIntersection}):
\[
s \in \left(K \cap \lspan(F)\right)^* = \cl \left(K^* + F^{\perp}\right)
\]
where $F^{\perp} := \left\{ s \in \E^* : \,\, \iprod{s}{x} = 0 \,\,\,\, \forall x \in F \right\}$.
Indeed, if $\left(K^* + F^{\perp}\right)$ happens to be closed,
then we can remove the closure operation; otherwise, we would have to deal with this closure operation in some way.
Beginning with this observation, we have our first hints for the uses of the concept of
\emph{Facially Dual Complete} convex cones.
Closed convex cones $K$ with the property that
\[
\left(K^* + F^{\perp}\right) \textup{ is closed for every proper face } F \lhd K,
\]
are called \emph{Facially Dual Complete (FDC)}.  Pataki \cite{Pataki2007,PatakiFexpNice} called such cones \emph{nice}.
FDC property is one of the main concepts that we study in this paper.  Our interest in FDCness is motivated
by many factors:
\begin{itemize}
\item
FDC property is very important in duality theory.
Presence of facial dual completeness makes various facial reduction algorithms behave
well, e.g. see Borwein and Wolkowicz \cite{BW1981}, Waki and
Muramatsu \cite{WakiMuramatsu2013} and Pataki \cite{Pataki2013} (where it is shown explicitly how facial reduction can be specialised for the case of FDC cones).  Currently, the only exact
characterization of FDCness is via facial reduction (see Liu and Pataki~\cite{LiuPataki2015b}).
For some other recent work related to
facial reduction, see \cite{Krislock2010,WakiMuramatsu2010,Krislock2012,Vris,Pataki2013,Permenter2014,Permenter2015,LiuPataki2015a,DPW2015,Lourenco2015,PPFRA}.
\item
FDC property is also relevant in the fundamental subject of closedness of the image
of a convex set under a linear map. See Pataki \cite{Pataki2007}
and the references therein.
\item
FDC property comes up in the area of lifted convex representations (see \cite{GPT2013}) and
in representations of a family of convex cones as a slice of another family of convex cones (see \cite{ChuaTuncel2008}).
\item
FDC property seems to have a rather mysterious connection (see Pataki \cite{PatakiFexpNice})
to facial exposedness of the underlying cone which
is an intriguing and rather beautiful geometric property.  Moreover, better understanding
of FDC property contributes to our understanding of the boundary structure of convex sets.
\end{itemize}

Our paper is organized as follows. In Section~\ref{sec:char} we recall some notation and some of the known results related to the facial structure of convex cones, then we state and prove the necessary and sufficient conditions for facial dual completeness (Theorems~\ref{thm:NecFDC} and~\ref{thm:SuffFDC}). Throughout this process, we introduce some new notions for exposure of faces.  In Figure~\ref{fig:Tablet1} we summarize some of the relationships among various exposure properties.  Up to and including 3-dimensions, for convex cones, all of the four properties we listed in Fig.~\ref{fig:Tablet1} are precisely the same.  Starting in 4-dimensions, these four properties
identify different sets of convex cones.  We are able to illustrate these 4-dimensional convex cones, by taking 3-dimensional slices.

\begin{figure}[ht]
	\centering
\begin{overpic}[scale=1
]{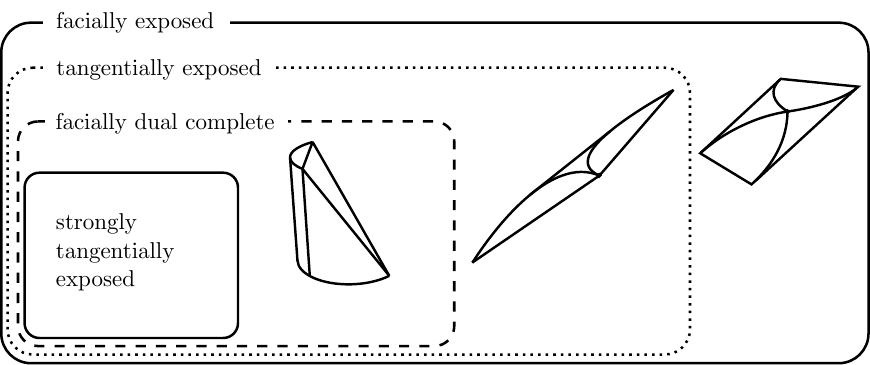}
 \put (35,5) {Example~\ref{eg:lost-tangent}}
 \put (60,5) {Example~\ref{eg:cubic}}
 \put (84,5) {Example~\ref{eq:one}}
\end{overpic}
	\caption{Relationships among various notions of facial exposure and FDCness. The graphics represent the examples discussed in this paper.}
	\label{fig:Tablet1}
\end{figure}

\bigskip
\section{Preliminaries}

Let $\E$ denote a finite dimensional Euclidean vector space, and let $\E^*$ be its dual.  Throughout this section by $K$  we denote a closed convex cone in $\E$.  We call $K$ \emph{regular} if $K$ is pointed (does not contain whole lines), closed, convex and has nonempty interior in $\E$.  If $K$ is a regular cone then so is its dual cone $K^*$.

Let $C\subseteq \E$ and $x\in C$. The \emph{cone of feasible directions of $C$ at $x$} is
\[
\mD(x;C):= \left\{d \in \E \, : \, \left(x +\epsilon d\right) \in C \textup{ for some } \epsilon >0 \right\}.
\]
The \emph{tangent cone} for $C$ at $x$ is
\[
\mT(x;C) := \cl \mD(x;C).
\]
Note that this definition can be restated in terms of the Painlev\'e--Kuratowski outer limit (see \cite{RockWets}),
$$
\mT(x;C)  = \Limsup_{t\to +\infty} t(C-x).
$$

The direction $s\in \E^*$ is said to be {\em normal} to a closed convex set $C$ at a point $x$ if
$$
\langle s, y-x\rangle \leq 0 \quad \forall\, y \in C.
$$
The set of all such directions is called the \emph{normal cone at $x$ to $C$}, denoted by $\mN(x;C)$.

In addition to the notion of dual cone, we also use the closely related concept of \emph{polar} of a set.
For a subset $C$ of $\E$, the \emph{polar} of $C$ is
\[
C^{\circ}:= \left\{ s \in \E^* : \iprod{s}{x} \leq 1 \,\,\,\, \forall x \in C \right\}.
\]
Note that for cones the notions of dual cone and polar are equivalent.  For example,
for every convex set $C$ and for every $x \in C$, we have
\[
\mN(x;C)= \left[\mT(x;C)\right]^{\circ} \textup{ and } \mT(x;C)= - \left[\mN(x;C)\right]^*.
\]
The following fact is used many times in this paper.
\begin{proposition}\label{prop:NormalIntersection} For every pair of closed convex cones $K_1$ and $K_2$ in $\E$, we have
\[
(K_1\cap K_2)^* = \cl \left(K^*_1+K_2^*\right).
\]
If the relative interiors of $K_1$ and $K_2$ have nonempty intersection, then
$K^*_1+K_2^*$ is a closed set and therefore the closure operation can be omitted.
\end{proposition}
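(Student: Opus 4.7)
The first identity is a straightforward application of the bipolar theorem, so the plan is to reduce everything to that theorem and a short direct computation.

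For the easy inclusion, I would take any $s = s_1 + s_2$ with $s_i \in K_i^*$ and any $x \in K_1 \cap K_2$, and note that $\iprod{s}{x} = \iprod{s_1}{x} + \iprod{s_2}{x} \geq 0$, so $K_1^* + K_2^* \subseteq (K_1 \cap K_2)^*$. Since the right-hand side is closed, we get $\cl(K_1^* + K_2^*) \subseteq (K_1 \cap K_2)^*$. For the reverse inclusion, the cleanest route is to compute $(K_1^* + K_2^*)^* = K_1 \cap K_2$ directly: if $x$ lies in the dual of the sum, then taking $s_2 = 0$ (allowed since $0 \in K_2^*$) shows $\iprod{s_1}{x} \geq 0$ for every $s_1 \in K_1^*$, hence $x \in K_1^{**} = K_1$; symmetrically $x \in K_2$. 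The reverse containment is immediate. Applying the bipolar theorem to the convex cone $K_1^* + K_2^*$ then yields
\[
(K_1 \cap K_2)^* = (K_1^* + K_2^*)^{**} = \cl(K_1^* + K_2^*),
\]
which is the desired identity.

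For the second claim, suppose $x_0 \in \relint K_1 \cap \relint K_2$ and let $(s^{(n)})$ be a sequence in $K_1^* + K_2^*$ converging to some $\bar s$; write $s^{(n)} = s_1^{(n)} + s_2^{(n)}$ with $s_i^{(n)} \in K_i^*$. I would use $x_0$ as a test point: because both $\iprod{s_1^{(n)}}{x_0}$ and $\iprod{s_2^{(n)}}{x_0}$ are nonnegative and their sum equals the bounded sequence $\iprod{s^{(n)}}{x_0}$, each of them is bounded. The key structural fact is that for a closed convex cone $K_i$ and a point $x_0 \in \relint K_i$, the set $\{ s \in K_i^* : \iprod{s}{x_0} \leq M\}$ is bounded modulo the lineality space of $K_i^*$ (equivalently, modulo $(\lspan K_i)^\perp$). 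Hence I may decompose $s_i^{(n)}$ into a bounded part in $(\lspan K_i)$ and an arbitrary component in $(\lspan K_i)^\perp$, and absorb the orthogonal parts into the other summand (since $(\lspan K_i)^\perp \subseteq K_j^*$ is not automatic, one has to be slightly careful, but a symmetric argument allows us to pass to a subsequence on which both $s_1^{(n)}$ and $s_2^{(n)}$ converge). The limits lie in the closed cones $K_1^*$ and $K_2^*$, so $\bar s \in K_1^* + K_2^*$, proving that the sum is already closed.

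The main technical obstacle is the closedness argument in the second part: extracting the bounded subsequences requires turning the relative interior condition into a quantitative coercivity statement on $K_i^*$. This is the content of the standard Rockafellar-style sum formula for polars under a relative interior constraint qualification; I would either cite that result directly or spell out the lineality-space decomposition sketched above. The bipolar-based argument for the first part is routine by comparison.
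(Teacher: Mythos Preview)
The paper does not give a proof at all: it simply cites Corollary~16.4.2 in Rockafellar and Remark~5.3.1 in Hiriart-Urruty and Lemar\'echal. Your bipolar argument for the first identity is exactly the standard argument behind those references, so on that part you are supplying the proof the paper omits, and it is correct as written.

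For the closedness claim your sketch has the right skeleton but a genuine gap at the absorption step. Saying ``a symmetric argument allows us to pass to a subsequence on which both $s_1^{(n)}$ and $s_2^{(n)}$ converge'' is not right: the individual components $s_i^{(n)}$ need not be bounded, because their parts in $(\lspan K_i)^\perp$ can blow up while cancelling in the sum. What actually closes the argument is this: write $s_i^{(n)} = a_i^{(n)} + b_i^{(n)}$ with $a_i^{(n)} \in \lspan K_i$ (bounded by your level-set observation) and $b_i^{(n)} \in (\lspan K_i)^\perp$; then $b_1^{(n)} + b_2^{(n)} = s^{(n)} - a_1^{(n)} - a_2^{(n)}$ is bounded, and since $(\lspan K_1)^\perp + (\lspan K_2)^\perp$ is a linear subspace, a bounded linear right inverse lets you rewrite $b_1^{(n)} + b_2^{(n)} = c_1^{(n)} + c_2^{(n)}$ with each $c_i^{(n)} \in (\lspan K_i)^\perp$ bounded. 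The pairs $a_i^{(n)} + c_i^{(n)}$ are then bounded, lie in $K_i^*$, and sum to $s^{(n)}$, so you can extract convergent subsequences. With this correction your direct proof goes through; alternatively, citing Rockafellar---as the paper does---is of course also acceptable.
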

\begin{proof}	
See Corollary 16.4.2 in Rockafellar \cite{RockConvAn} and Remark~5.3.1. in \cite{JBHU}.
\end{proof}

Our results can be established in a coordinate-free way by keeping the operations on
sets in the primal space and the dual space separate\footnote{
Let $F \subset \E$. Then we may consider the dual cone
of $F$ with respect to any Euclidean space $L$ such that $\lspan(F) \subseteq L \subseteq \E$.
We could denote by $F|_{L}^*$ the dual cone of $F$ in $\E^*/L^{\perp}$; i.e.,
\[
F|_{L}^* :=  \left\{ s \in \E^*/L^{\perp} : \,\, \iprod{s}{x} \geq 0 \,\,\,\, \forall x \in F \right\}.
\]
Next, we would define the projection map in the dual space.
For $C \subseteq \E^*$,
\[
\Pi_{\E^*/L^{\perp}}(C) := \left\{ [v] : v \in C \right\},
\]
where $[v]$ is the equivalence class of $v \in \E^*$ with respect to $L^{\perp}$.}.
However, for reducing the amount of
notation and for better readability, we pick a basis for $\E$, define an inner product on $\E$
from the scalar product above so that with this fixed inner-product $\E=\E^*=\R^n$.  From now on, 
$\iprod{\cdot}{\cdot}$ denotes an inner-product on $\R^n$.

Let $C$ be a closed convex set and let $S$ be a nonempty subset of $C$. We define the {\em minimal face}\index{minimal face} of $C$ containing $S$ as follows:
$$
\face(S;C) := \bigcap \{ F\,: \, F\unlhd C, S\subseteq F \}.
$$	
The following facts are elementary (and a few are well-known), we present all but one without proof. For $u \in \R^n$, we denote
\[
u^{\perp} := \left\{ x \in \R^n \,\, : \,\, \iprod{u}{x} = 0 \right\}.
\]
\begin{proposition}[Properties of faces]\label{prop:ff} Let $C$ be a closed convex set in $\R^n$. Then the following properties are true:
	\begin{itemize}
		\item[(i)] face of a face of $C$ is a face of $C$ (i.e., $G \unlhd F\unlhd C$ implies $G\unlhd C$);
		\item[(ii)] for every $x\in C$ and every $u\in \mN(x;C)$ with $F:= \face(\{x\},C)$, the set $\mT(x;F) \cap u^{\perp}$ is a face of $\mT(x;F)$;
		\item[(iii)]for every $S \subseteq C$, we have $\relint \left(\conv S\right) \cap \relint \left(\face(S;C)\right) \neq \emptyset$.
	\end{itemize}
\end{proposition}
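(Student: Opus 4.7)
The plan is to prove part (ii), which I read as the assertion that $\mT(x;C) \cap u^{\perp}$ is a face of $\mT(x;C)$ (interpreting the $F$ in the typeset statement as a misprint for $C$; the argument is identical if $F$ denotes any face of $C$ that contains $x$, since $u \in \mN(x;C)$ then also lies in $\mN(x;F)$). Parts (i) and (iii) are classical results in convex analysis and are presumably the two that the authors state without proof. The strategy for (ii) is to show that $u^{\perp}$ supports the closed convex cone $\mT(x;C)$ at the origin, so that the intersection is in fact an exposed face.

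The key step is to establish $\langle u, d \rangle \leq 0$ for every $d \in \mT(x;C)$. Since $\mT(x;C) = \cl \mD(x;C)$, it suffices to verify this on $\mD(x;C)$ and then pass to the closure. If $d \in \mD(x;C)$, there is some $\epsilon > 0$ with $x + \epsilon d \in C$, so by the defining inequality of $\mN(x;C)$,
\[
\epsilon \langle u, d \rangle = \langle u, (x + \epsilon d) - x \rangle \leq 0,
\]
which gives $\langle u, d \rangle \leq 0$. Continuity of the linear functional $\langle u, \cdot \rangle$ extends the inequality to all of $\mT(x;C)$. (One could equally well argue directly from the $\Limsup$ definition: any $d \in \mT(x;C)$ is a limit of vectors $t_k(x_k - x)$ with $t_k > 0$, $x_k \in C$, each of which has non-positive inner product with $u$.)

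To conclude, I verify the face property directly. Let $d \in G := \mT(x;C) \cap u^{\perp}$ and suppose $d = \lambda d_1 + (1 - \lambda) d_2$ with $d_1, d_2 \in \mT(x;C)$ and $\lambda \in (0,1)$. By the previous step, $\langle u, d_1 \rangle \leq 0$ and $\langle u, d_2 \rangle \leq 0$, while
\[
\lambda \langle u, d_1 \rangle + (1 - \lambda) \langle u, d_2 \rangle = \langle u, d \rangle = 0.
\]
A vanishing positive-weight combination of non-positive numbers forces each summand to vanish, so $d_1, d_2 \in u^{\perp}$ and hence in $G$. I do not anticipate a real obstacle here; the only points requiring care are the sign convention in the definition of $\mN(x;C)$ and the routine passage from $\mD(x;C)$ to $\mT(x;C)$ via closure.
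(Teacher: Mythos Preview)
Your proof of part (ii) is correct, and your handling of the $F$/$C$ ambiguity is appropriate. However, there is nothing to compare against: the paper does not prove Proposition~\ref{prop:ff} at all. The sentence ``we present all but one without proof'' refers to the surrounding block of elementary propositions, and the single one that receives a proof is Proposition~\ref{prop:SupSetTang}, not any part of Proposition~\ref{prop:ff}. All three items here are simply stated as well-known facts.

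For the record, your argument is the standard one and exactly what the authors would have in mind: $u \in \mN(x;C)$ gives $\langle u,\cdot\rangle \leq 0$ on $\mD(x;C)$ and hence on its closure $\mT(x;C)$, so $u^\perp$ is a supporting hyperplane to $\mT(x;C)$ at the origin and the intersection is an (exposed) face. Your remark that parts (i) and (iii) are classical is also accurate; (i) is immediate from the definition and (iii) is, e.g., Theorem~18.2 in Rockafellar.
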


\begin{proposition}\label{prop:SupSetTang} Let $K$ be a closed convex cone in $\R^n$.  Then, for every pair
$(u,x)$ with $u \in K^*$ and $x \in \left(K\cap u^{\perp}\right)$, with $F:= \face(\{x\},K)$, we have $u\in \left[\mT(x;F)\right]^*$.
\end{proposition}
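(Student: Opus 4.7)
The plan is to reduce the claim to an elementary check on the cone of feasible directions $\mD(x;K)$, and then pass to the closure. (I read $F$ in the statement as a typo for $K$, since no other cone named $F$ has been introduced and the claim only involves $K$, $K^*$, and $x$.)

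First I would invoke the identity $\mT(x;K) = \cl \mD(x;K)$ from the preliminaries, together with the general fact that the dual cone of a set equals the dual cone of its closure. This means it suffices to prove $u \in [\mD(x;K)]^*$, i.e., $\iprod{u}{d} \geq 0$ for every feasible direction $d$ of $K$ at $x$.

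Second, given $d \in \mD(x;K)$, by definition there exists $\epsilon > 0$ with $x + \epsilon d \in K$. Since $u \in K^*$, this gives
\[
0 \le \iprod{u}{x+\epsilon d} = \iprod{u}{x} + \epsilon \iprod{u}{d} = \epsilon \iprod{u}{d},
\]
where the last equality uses the assumption $x \in u^{\perp}$. Dividing by $\epsilon > 0$ yields $\iprod{u}{d} \ge 0$, as required.

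There is essentially no obstacle here; the proof is just unpacking definitions. The only point where care is needed is the passage from $\mD(x;K)$ to $\mT(x;K)$, which is justified because $\{u : \iprod{u}{d} \ge 0 \text{ for all } d \in A\}$ is closed in $d$ for fixed $u$, so the inequality extends from $A = \mD(x;K)$ to its closure $\mT(x;K)$. Alternatively, one can argue directly from the $\Limsup$ definition: any $d \in \mT(x;K)$ is a limit of $t_n(y_n - x)$ with $y_n \in K$, $t_n \to +\infty$, and then $\iprod{u}{t_n(y_n-x)} = t_n \iprod{u}{y_n} \ge 0$ using $\iprod{u}{x}=0$ and $u \in K^*$; passing to the limit gives $\iprod{u}{d} \ge 0$.
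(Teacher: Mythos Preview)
Your proof is correct and is essentially the same as the paper's, just with the details of ``$u$ defines a supporting hyperplane to $K$ at $x$, hence also to $\mT(x;K)$'' spelled out explicitly via $\mD(x;K)$ and closure. Your reading of $F$ as $K$ is reasonable (the paper never defines $F$ in the statement either), and in any case the identical computation goes through for any $F\subseteq K$ with $x\in F$, since then $\mT(x;F)\subseteq \mT(x;K)$.
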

\begin{proof}	
	Since $u$ defines a supporting hyperplane to $F$ at $x$, this supporting hyperplane is also a
supporting hyperplane for the tangent cone, and hence $u\in \left[\mT(x;F)\right]^*$.
\end{proof}

\begin{proposition}\label{prop:CharNice} A closed convex cone $K\subseteq \R^n$ is FDC if and only if for every face $F\lhd K$
	$$
	F^* \cap \lspan F = \Pi_{\lspan F}(K^*).
	$$
\end{proposition}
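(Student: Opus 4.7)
The plan is to reduce the identity $F|_{L}^* = \Pi_{\E^*/L^{\perp}}(K^*)$ to the closedness of $K^* + F^\perp$, which is exactly the FDC condition. The main obstacle is recognizing that one needs the auxiliary identity $F = K \cap \lspan F$ in order to extract genuine equality from Proposition~\ref{prop:NormalIntersection}.

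First, I would combine the identity $F|_{L}^* = \Pi_{\E^*/L^{\perp}}(F^*)$ derived just before the statement of the proposition with the obvious inclusion $K^* \subseteq F^*$ (from $F \subseteq K$) to rewrite the target identity as $\Pi_{\E^*/L^{\perp}}(F^*) = \Pi_{\E^*/L^{\perp}}(K^*)$. Because $L^\perp = F^\perp$ and because both $K^*$ and $F^\perp$ are contained in $F^*$, a short chase reduces this further to the set equality $F^* = K^* + F^\perp$ inside $\E^*$.

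Second, I would establish the observation that $F = K \cap L$ whenever $F \unlhd K$ and $L = \lspan F$. The nontrivial inclusion is $K \cap L \subseteq F$: since $F$ is a cone, $L = F - F$, so any $x \in K \cap L$ admits a decomposition $x = f - g$ with $f, g \in F$; then $f/2 \in F$ is the midpoint of the points $x, g \in K$, so the face property of $F$ in $K$ forces $x \in F$. This is the step most easily overlooked; without it, Proposition~\ref{prop:NormalIntersection} delivers only the one-sided relation $F^* \supseteq \cl(K^* + F^\perp)$, which is insufficient.

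Third, I would apply Proposition~\ref{prop:NormalIntersection} with $K_1 := K$ and $K_2 := L$ (noting $L^* = L^\perp = F^\perp$ because $L$ is a subspace) to obtain
$$F^* = (K \cap L)^* = \cl(K^* + F^\perp).$$
Combining with the reduction in the first step, $F|_{L}^* = \Pi_{\E^*/L^{\perp}}(K^*)$ holds precisely when $K^* + F^\perp$ equals its closure, i.e., is closed. Quantifying over all proper faces $F \lhd K$ yields the claimed equivalence with the definition of FDCness.
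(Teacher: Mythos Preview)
Your proof is correct. The paper does not actually supply a proof of this proposition; it is listed among the ``elementary'' facts that are stated without argument, so there is nothing to compare against directly. Your route---reducing the projection identity to $F^* = K^* + F^\perp$ via the observation that both sides are $L^\perp$-invariant, establishing $F = K \cap \lspan F$ from the face definition, and then invoking Proposition~\ref{prop:NormalIntersection} to obtain $F^* = \cl(K^* + F^\perp)$---is the natural one and fills the gap cleanly. The midpoint argument in your second step (writing $x = f - g$ and noting $f/2$ is the midpoint of $x$ and $g$) is correct once one handles the degenerate case $x = g$ separately, which is immediate.
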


Here by $\Pi_{L}$ we denote the orthogonal projection onto a linear subspace $L\subseteq \R^n$, i.e. for each $x\in \R^n$ the projection $p = \Pi_L(x)$ is the unique point $p\in L$ such that
$$
\|p-x\|= \min_{y\in L}\|y-x\|.
$$
Above, we used the Euclidean norm induced by the inner product, hence, for $p=\Pi_L(x)$ we have, in particular, $(x-p) \in L^\perp$, a fact utilised heavily in the sequel.

\section{Facially Dual Complete Cones and Tangential Exposure}\label{sec:char}

We say that a closed convex  set $C$ in $\R^n$ has {\em tangential exposure} property if
\begin{equation}\label{eq:GeomCond}
	\mT(x;C)\cap \lspan (F-x) = \mT(x;F)\quad \forall F \lhd C,\; \forall x\in F.
\end{equation}
If $C$ is a convex cone then $\lspan (F-x) = \lspan F$ for every $x \in F$. So, in this special
case, we may write $\lspan F$ instead of $\lspan (F-x)$.

Tangential exposure is a stronger property than facial exposure. We discuss the relation between these two notions and provide illustrative examples later in this section.
Tangential exposure property can be equivalently characterised as \emph{subtransversality} of the set $C$ and the affine span of the face $F$ (see \cite{Ioffe2017}). We also note that
while this paper was being revised, a similar condition was used to derive error bounds for conic problems \cite{LourencoErrorBounds}.
Next, we prove Theorem~\ref{thm:NecFDC} which gives a necessary condition for the FDC property, establishing that every FDC cone is tangentially exposed.

\subsection{Proof of the necessary condition}\label{sec:NecProof}

\begin{theorem}
	\label{thm:NecFDC}
	If a closed convex cone $K \subseteq \R^n$ is facially dual complete, then for every $F \lhd K$ and every $x\in F$, we have
	\begin{equation}\label{eq:GeomCond}
		\mT(x;K)\cap \lspan F = \mT(x;F).
	\end{equation}
\end{theorem}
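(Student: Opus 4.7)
The plan splits into the easy inclusion and a duality argument for the hard one. The inclusion $\mT(x;F) \subseteq \mT(x;K) \cap \lspan F$ is immediate: $F \subseteq K$ gives $\mT(x;F) \subseteq \mT(x;K)$, while $F - x \subseteq \lspan F$ gives $\mT(x;F) \subseteq \lspan F$. I would dispatch it in one line.

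For the reverse inclusion, set $L := \lspan F$, take an arbitrary $v \in \mT(x;K) \cap L$, and argue by bipolarity inside $L$. First I would identify the two relevant dual cones. Using the standard formula $\mT(x;F)=\cl(F+\R_+(-x))$ (which follows directly from $\mT(x;F)=\cl \mD(x;F)$ and the fact that $F$ is a cone containing $x$), together with $(A+B)^* = A^* \cap B^*$ for closed convex cones, a routine calculation gives
\[
[\mT(x;F)]^{*} \textup{ computed in } L \;=\; F|_L^* \cap \{u \in L^*: \iprod{u}{x}=0\},
\]
and the analogous computation in $\E$ yields $[\mT(x;K)]^* = K^* \cap \{s \in \E^*: \iprod{s}{x}=0\}$. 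By the bipolar theorem applied in $L$, proving $v \in \mT(x;F)$ now reduces to verifying $\iprod{u}{v}\geq 0$ for every $u$ in the first of these two cones.

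This is exactly where FDC enters, through Proposition~\ref{prop:CharNice}: $F|_L^* = \Proj_{\E^*/L^\perp}(K^*)$, so any such $u$ lifts to some $s \in K^*$ with $s|_L = u$. Since $x \in L$, one gets $\iprod{s}{x}=\iprod{u}{x}=0$, so $s \in K^* \cap \{s:\iprod{s}{x}=0\} = [\mT(x;K)]^*$. Then $v \in \mT(x;K)$ forces $\iprod{s}{v}\geq 0$, and $v \in L$ gives $\iprod{s}{v}=\iprod{u}{v}$, yielding the desired inequality and closing the bipolar argument.

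The only real delicacy I anticipate is the bookkeeping between duality in $L$ and duality in $\E$: the natural identification of $L^*$ with $\E^*/L^\perp$ by restriction of functionals is precisely what converts the surjectivity $F|_L^* = \Proj_{\E^*/L^\perp}(K^*)$ into a concrete lift of $u \in L^*$ to an element of $K^*$ that simultaneously agrees with $u$ on $L$ and annihilates $x$ — which is exactly what the bipolar step requires.
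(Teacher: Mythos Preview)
Your argument is correct and rests on the same core mechanism as the paper's proof: invoke FDC to lift a functional from $L^*$ (annihilating $x$) to an element of $K^*$, and then use duality of tangent cones to conclude. The execution differs in presentation. The paper argues by contradiction---it separates a putative $g\in(\mT(x;K)\cap L)\setminus\mT(x;F)$ from $\mT(x;F)$ inside $L$, lifts the separating functional via $F^*=K^*+F^\perp$, and then chases a sequence $s_k\to x$ along $g$ to contradict $y\in -K^*$, with an extra verification that $\iprod{p}{x}=0$. Your bipolar argument is more direct: the formulas $[\mT(x;F)]^*_L=F|_L^*\cap x^\perp$ and $[\mT(x;K)]^*=K^*\cap x^\perp$ absorb both the $\iprod{\cdot}{x}=0$ step and the sequence argument into a one-line duality check, so nothing is lost and the bookkeeping is a bit cleaner. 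Functionally the two proofs are the same.
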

\begin{proof}
	Since $\mT(x;F)$ is a subset of both $\mT(x;K)$ and $\lspan F$, the inclusion
\[
\mT(x;K)\cap \lspan F \supseteq \mT(x;F)
\]
follows.
	For the reverse inclusion, for the sake of reaching a contradiction,
	assume the contrary: $K$ is facially dual complete, but there exist $F\triangleleft K$ and $x\in F$ such that \eqref{eq:GeomCond} does not hold.
	Then, there exists $g\in \mT(x;K)\cap \lspan F $ such that $g\notin \mT(x;F)$. Without loss of generality, we may assume $\|g\|=1$.
	Since $g\in \lspan F=:L$, applying the hyperplane separation theorem to $g$ and $\mT(x;F)$,
	in the space of $\lspan F$, we deduce that there exists $p\in \mN(x;F) \cap L$ such that $\langle p,g\rangle > 0$.
	
	Since $F$ is a cone, we have $\mN(x;F)\subseteq \mN(0;F)=-F^*$, hence, $p\in -F^*$.
	Since $K$ is facially dual complete, by Remark~1 in \cite{PatakiFexpNice} we have $F^* = K^*+F^\perp$; hence, there exist
	$y\in -K^*$ and $z\in F^\perp$ such that $y = p-z$. Since $g\in \lspan F$ and $z\in F^\perp$, we have
	$$
	\langle y, g\rangle = \langle p-z,g\rangle  = \langle p,g\rangle>0.
	$$
	Since $g\in \mT(x;K)$, there exists a sequence $\{s_k\}$, such that $s_k\in K$ and
	$$
	\lim_{k\to\infty} \frac{s_k-x}{\|s_k-x\|} = g.
	$$
	Therefore,
	$$
	\lim_{k\to\infty} \frac{\langle s_k-x,y\rangle }{\|s_k-x\|} = \langle g,y\rangle >0,
	$$
	and there exists $k$ large enough such that
	$$
	\langle s_k-x,y\rangle>0.
	$$
	Now observe that since $F$ is a cone, and $x\in F$, we also have $\frac{1}{2} x\in F$ and $\frac{3}{2}x\in F$, hence, by the definition of the tangent cone,
	$$
	-\frac{1}{2}x,\frac{1}{2}x \in \mT(x;F).
	$$
	Since $p\in \mN(x;F)$, this yields $\langle p,x\rangle = 0$. Then $\langle x,y\rangle = \langle x,p\rangle - \langle x,z\rangle = 0$, and we have
	$$
	0 <\langle s_k-x,y\rangle = \langle s_k,y\rangle.
	$$
	However, this is impossible, as $s_k\in K$, $y\in -K^*$, and hence $\langle s_k,y\rangle \leq 0$. Therefore, our assumption is not true,
	and by the arbitrariness of $F$ and $x$ we have shown that \eqref{eq:GeomCond} holds for all $F\lhd K$ and all $x\in F$.
\end{proof}

For the sake of completeness of our exposition, we prove that the tangential exposure yields facial exposure.

\begin{proposition}\label{prop:texpexp} Let $C\subseteq\R^n$ be a closed, convex, tangentially exposed  set. Then every proper face $F\lhd C$ is exposed.
\end{proposition}
\begin{proof} Let $C$ be as in the statement of the proposition, and assume that $F$ is its proper face. Without loss of generality assume that $0\in \relint F$. Let $E$ be the smallest exposed face of $C$ that contains $F$. If $E=F$, there is nothing to prove, so assume that $F\neq E$. Thus, $F \cap \relint E =\emptyset$.
	
For every $p\in \relint E$ we have $-\alpha p\notin E$ for all $\alpha>0$ (otherwise $(p,-\alpha p)\in C$, and by the definition of a face $[p,-\alpha p]\in F$, which is impossible due to $F\cap \relint E = \emptyset$). It follows that $-p\notin \mT(0;E)$.  
	
By the tangential exposure property, $-p\notin \mT(0;C)$, hence, $-p$ can be separated from $\mT(0;C)$: there exists some $g\neq 0$ such that 
$$
\langle g, -p\rangle >\sup_{v\in \mT(0;C)}\langle g,v\rangle = 0.
$$  
Observe that the normal $g$ defines a supporting hyperplane to $\mT(0;C)$ (and hence to $C$) that contains zero, but does not contain $E$ (since $\langle g,p\rangle <0$ for $p\in \relint E$). This supporting hyperplane exposes some face $G$ of $C$ which contains $F$, because $0\in \relint F$. The intersection $G\cap E$ is a nonempty face of $C$ that contains $F$. Since both $G$ and $E$ are exposed, their intersection is also exposed. The face $G\cap E$ is exposed, contains $F$ and is strictly smaller than $E$. This contradicts the definition of $E$.
\end{proof}

There are regular cones which are facially exposed, not FDC and not tangentially exposed.  The example from Roshchina \cite{Roshchina} satisfies
these properties, see Figure~\ref{fig:original}. Nevertheless, there are facially exposed regular cones that are also tangentially exposed, but not FDC.  We can prove this
by modifying the example from \cite{Roshchina}.

\begin{example}\label{eq:one} We revisit the example from \cite{Roshchina}. The closed convex cone $K\subset \R^4$ is a standard homogenization $K = \cone \{C\times \{1\}\}$ of a compact convex set $C\subset \R^3$ whose construction and Mathematica rendering are shown in Fig.~\ref{fig:original}.
\begin{figure}[ht]
	\centering
	\includegraphics[scale=1]{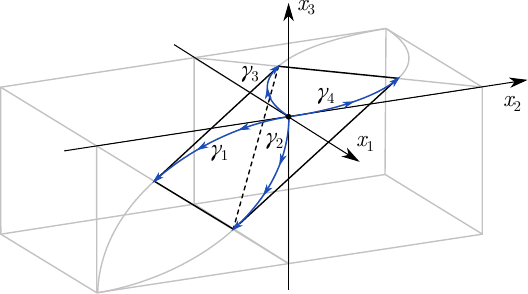}
	\qquad
	\includegraphics[height = 120pt]{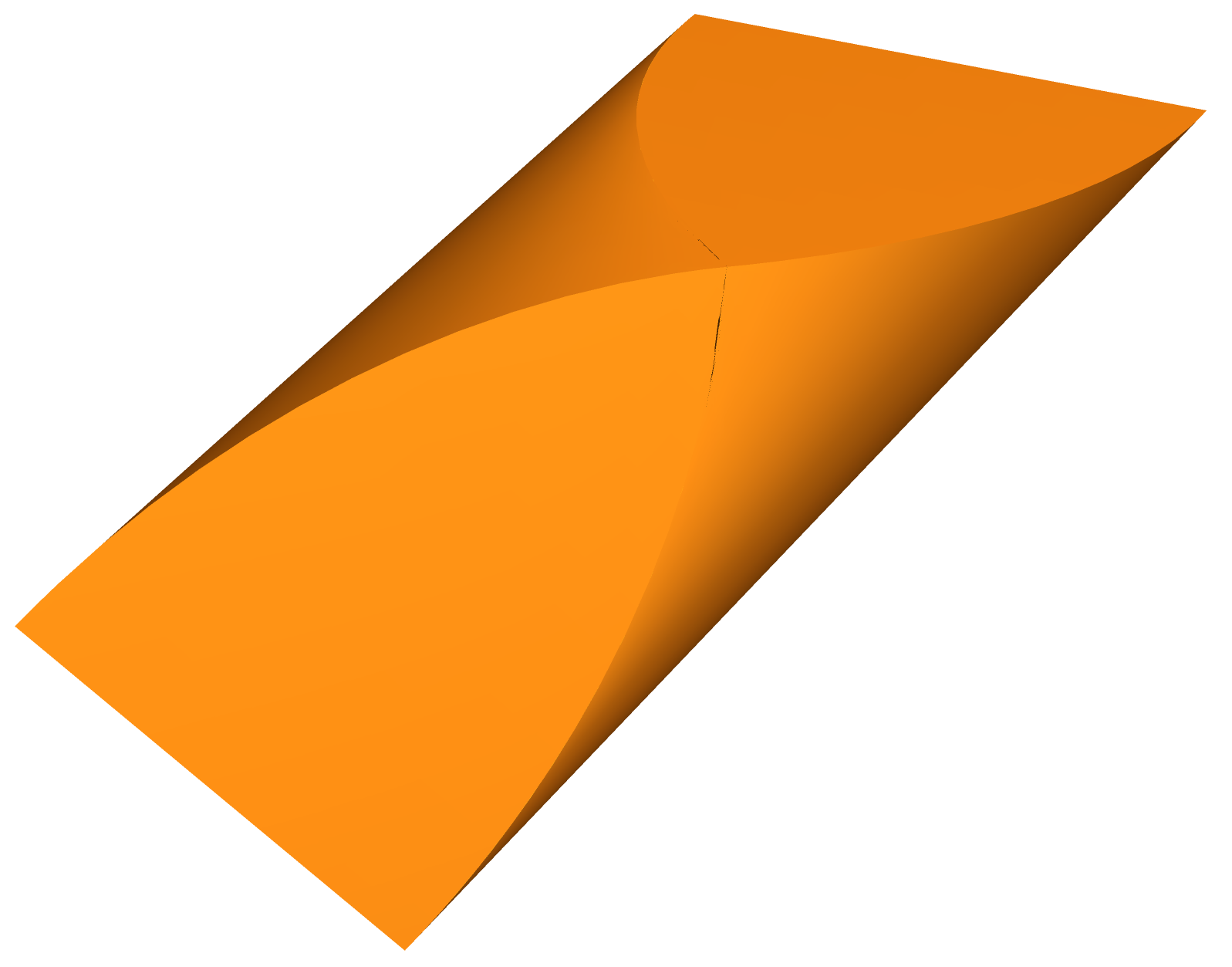}
	\caption{A slice of a closed convex cone that is facially exposed but not FDC. Notice that this set is not \emph{strongly facially exposed}
	(i.e., there exists at least a face that is not facially exposed).}\label{fig:original}
\end{figure}
The set $C$ is a nonsingular affine transformation of the convex hull of four curves.
In particular, it is $\conv \{\gamma_1,\gamma_2,\gamma_3, \gamma_4\}$, where
\begin{align*}\label{eq:curves-original}
\gamma_1 (t) & := \left(0, - \sin t,  \cos t -1\right),
&
\gamma_2 (t) & := \left(0, \cos t-1, - \sin t \right),\notag\\
\gamma_3 (t) & := \left(-\sin t, 1- \cos t,  0 \right),
&
\gamma_4 (t) & := \left(\cos t -1, \sin t, 0\right),
\end{align*}
and $t\in [0, \pi/4]$.
It is not difficult to observe that if $C$ fails the tangential exposure property, then its homogenization $K$ does as well (if the convex set $C$
is not tangentially exposed then the certificate of this fact---a face $F$ and $x \in F$---leads to a corresponding certificate for $K$ failing the tangential
exposure property). The failure of tangential exposure for the set $C$ is evident from considering tangents to the face $F = \conv \{\gamma_3,\gamma_4\}$ and $C$ at the point $(0,0,0)$. Indeed, it is clear that $g:= (0,-1,0)\in \mT(x;K)$ since
$$
(0,-1,0) = \Limsup_{t\to \infty} t\gamma_1(t^{-1}) = \lim_{s\downarrow 0} \frac{(0, -\sin s, \cos s -1)}{s}.
$$
On the other hand,
$$
\langle g, \gamma_3(t)\rangle = \cos t -1\leq 0, \qquad \langle g, \gamma_4(t)\rangle = -\sin t\leq 0 \quad \forall t\in [0,\pi/4],
$$
hence $g$ is separated strictly from $\mT(x;F)$. This is illustrated geometrically in Fig.~\ref{fig:original-tangent}.
\begin{figure}[ht]
	\centering
	\includegraphics[scale=1]{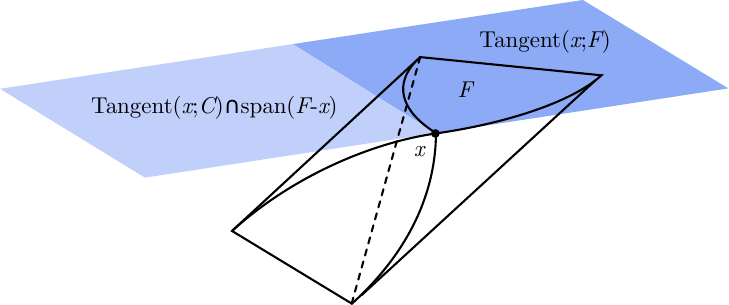}
	\caption{Failure of tangential exposure}\label{fig:original-tangent}
\end{figure}
\end{example}

\begin{example}\label{eg:cubic}
We construct a modified example of a closed convex cone that is facially \emph{and} tangentially exposed, but is not facially dual complete. This cone is a homogenization of the three-dimensional set $C$ that is a convex hull of two curves, one is a piece of a parabola, and the other one is a twisted cubic (see Fig.~\ref{fig:cubic}).
\begin{figure}[ht]
	\centering
	\includegraphics[scale=1]{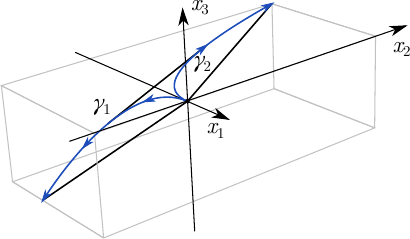}
	\includegraphics[height = 110pt]{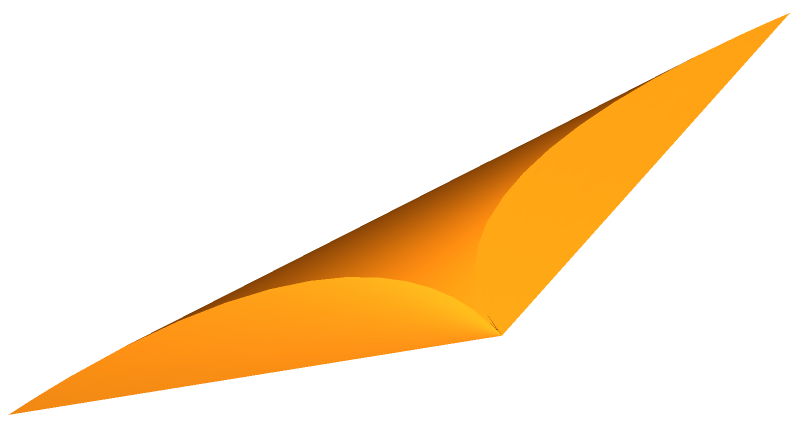}
	\caption{A rendering of construction of Example 2: A slice of a closed convex cone that is tangentially exposed but not facially dual complete.}\label{fig:cubic}
\end{figure}
So, we have $K := \cone \{C\times \{1\}\}$, $C := \conv\{\gamma_1,\gamma_2\}$, where
$$
\gamma_1(s)  = (-s, -s^2, -s^3), \, s\in [0,1]
 \quad  \text{and}\quad
\gamma_2(t)  = (-t, t^2, 0),\;t\in [0,1/3 (2  + \sqrt{7} )].
$$
It is a technical exercise to show that the cone $K$ (or equivalently the set $C$) is tangentially exposed, but not FDC.
\begin{figure}[ht]
	\centering
	\includegraphics[scale=1]{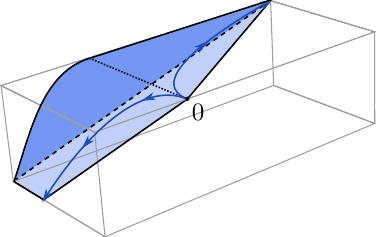}
	\includegraphics[height = 110pt]{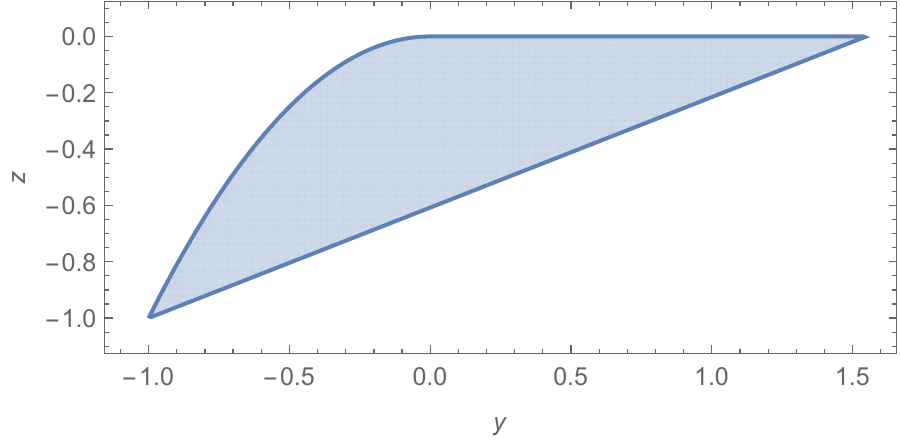}
	\caption{An illustration of how the tangent cone at the origin for Example 2 is not tangentially exposed.}\label{fig:second-order-cubic-fail}
\end{figure}
We leave the detailed algebraic computations, as well as the proof that the set is not FDC, to the Appendix.
\end{example}

\subsection{Lexicographic tangent cones}

The last example leads us to the next idea.  The above regular cone is facially exposed and tangentially exposed, but it is not FDC.  Also, its tangent
cone to $C$ at $x=(0,0,0)$ is not tangentially exposed itself. This is intuitively clear from  Fig.~\ref{fig:second-order-cubic-fail}, where the dotted line
in the left-hand-side graphic shows the set of points for which the tangential exposure property fails (on the tangent cone at $(0,0,0)$)
with respect to the adjacent flat face, and the right-hand-side plot shows the slice of this second-order tangent cone.  So, we consider
a stronger property defined by enforcing tangential exposure
condition \eqref{eq:GeomCond} recursively on all tangent cones. For example, a second-order tangent cone for $C$ at $x \in C$ and $v \in \mT(x;C)$ is:
\begin{eqnarray*}
\mT\left[v; \mT(x;C)\right] & = & \Limsup_{t_2 \to +\infty} t_2 \left[\mT(x;C)-v\right]\\
& = & \Limsup_{t_2 \to +\infty} t_2 \left\{\left[ \Limsup_{t_1 \to +\infty} t_1 (C-x)\right]-v\right\}.
\end{eqnarray*}
We may recursively apply this construction to generate $k$th-order tangent cones for every nonnegative
integer $k$.
This geometric notion is a geometric counterpart of
Nesterov's \emph{lexicographic derivatives} (see \cite{NesterovLex} for this analytic notion,
and the references therein).
Any tangent cone obtained as a result of the above recursive procedure (of any order) is called a \emph{lexicographic tangent cone} of $C$.
We say that a closed convex set is {\em strongly tangentially exposed} if
it is tangentially exposed along with all of its lexicographic tangent cones.

Next, we investigate some fundamental properties of the family of lexicographic tangent cones of closed convex sets.
Observe that for $u,v \in C$ such that $\face(u;C) = \face(v;C)=:F$, we have
\[
\mT(u;C) = \mT(v;C)=:\mT(F;C).
\]
That is, $\mT(F;C)$ denotes the tangent cone for $C$ at any $x \in \relint F$ for $F \unlhd C$.
Thus, the cardinality of distinct tangent cones of $C$ is bounded by the cardinality of the set of faces of $C$.
With this notation, our
Theorem \ref{thm:NecFDC} can be restated as:\\
Let $K$ be a regular cone that is FDC. Then for every pair of
faces $F,G$ such that $G \lhd F \unlhd K$, we have
\[
\mT(G;K) \cap \lspan F = \mT(G;F).
\]

Let
\[
\cT : \textup{ families of non-empty closed convex sets in $\R^n$ } \to \textup{families of non-empty closed convex cones in $\R^n$},
\]
defined by
\[
\cT(\cK) := \left\{\mT(F;K) : \,\, \forall F \unlhd K, \,\, F \neq \emptyset, \,\, \forall K \in \cK \right\},
\]
i.e. 
\[
\cT(\cK) = \textup{ the set of all tangent cones of convex sets in $\cK$.}
\]
We further define $\cT^0(\cK):=\cK$ and for every positive integer $k$, $\cT^k(\cK):= \cT\left[\cT^{k-1}(\cK)\right].$
Note that, if for some family of convex sets $\cK$, we have $\cT(\cK) = \cK$, then
\begin{equation}
\label{eqn:tandepth}
\cT^k(\cK) = \cK, \,\,\,\, \textup{ for every nonnegative integer } k.
\end{equation}
Let $C$ be a closed convex set. We abuse the notation slightly and write $\cT(C)$ for $\cT(\{C\})$ (when $\cK$ is a singleton $C$, we write $\cT^k(C)$ instead of
$\cT^k(\{C\})$). 
Then, the \emph{tangential depth} of $C$ is the smallest nonnegative integer
$k$ such that $\cT^{k+1}(C) = \cT^k (C).$ The tangential depth of $\R^n$ is zero for every nonnegative integer $n$ and the tangential
depth of $\R^n_+$ is one for every positive integer $n$.
For example, $\cT(\R_+) = \{\R_+, \R\} = \cT^2(\R_+)$, and, 
\[
\cT(\R_+^3) = \{\R_+^3, \R_+^2 \times \R, \R_+ \times \R^2, \R^3 \} = \cT^2(\R_+^3).
\]
In the above, we listed the elements of $\cT(\R_+^3)$ up to linear isomorphism (there are eight cones in $\cT(\R_+^3)$;
three of them are isomorphic to $\R_+^2 \times \R$, and another group of three are isomorphic to $\R_+ \times \R^2)$.
Next, for every positive integer $n$, consider the second order cone $\textup{SOC}^n$.
\[
\cT(\textup{SOC}^n) = \left\{\textup{SOC}^n, \textup{ a closed half space}, \R^n \right\} = \cT^2(\textup{SOC}^n).
\]
Thus, the tangential depth of $\textup{SOC}^n$ is one, for every positive integer $n$.
Note that for $n=1$, the first two elements listed in $\cT(\textup{SOC}^n)$ are linearly isomorphic, and for $n \geq 2$,
the second element represents infinitely many such cones (one for each extreme ray of $\textup{SOC}^n$).

We call a nonempty regular cone \emph{smooth} if every boundary point of $K$ is on an extreme ray of $K$
and the normal cone of $K$ at every extreme ray of $K$ has dimension one so that every extreme ray of $K$ is
exposed by a unique supporting hyperplane of $K$.
All smooth cones have tangential depth one.  Using the fact that almost all regular cones are smooth (in the space of
all regular cones),
we can conclude that almost all regular cones have tangential depth one.  Indeed, we must caution the reader that
this last statement is measure theoretic in nature and many of the interesting regular cones we encounter in optimization
are not smooth.

Given a nonempty closed convex cone $K$, suppose there exists a
nonnegative integer $k$ such that $\cT^{k+1}(K) \setminus \cT^k(K)$ contains only polyhedral cones and cones
$C$ with the property that when we express $C=\bar{C} + L$ with $L$ being the lineality space of $C$, the cone $\bar{C}$
is a smooth cone.  Then, using the above ideas, we can prove that the tangential depth of $K$ is at most $(k+2)$. 

Next, we prove that the tangential depth of every regular cone
is bounded by its dimension.

\begin{theorem}
	\label{thm:tangentialdepth}
Let $K \in \R^N$ be a nonempty closed convex cone. Then, the tangential depth of $K$ is at most $(d-\ell)$,
where $d$ is the dimension of $K$ and $\ell$ is the dimension of the lineality space of $K$.
	\end{theorem}

\begin{proof}
Let $K$ be as in the statement of the theorem and let $L$ denote the lineality space of $K$.  For every proper face $F \lhd K$, $\lspan(F) \supseteq L$.
If $\lspan(F) = L$, then $\mT(F;K) = K$.  However, if $\lspan(F) \setminus L \neq \emptyset$,
then since $\lspan(F)$ is a linear subspace, and $\mT(F;K)$ contains $\lspan(F)$, the dimension of
the lineality space of $\mT(F;K)$ is at least $(\ell+1)$.  Now, let $k$ be a nonnegative integer and apply this
observation to every cone in $\cT^k(K)$.  We conclude that every cone $K'$ in $\cT^{k+1}(K) \setminus
\cT^k(K)$ is $\mT(F;\tilde{K})$ for some parent cone $\tilde{K} \in \cT^k(K)$ and for a proper face $F$ of $\tilde{K}$.
Now, combining this with the observation \eqref{eqn:tandepth}, we see that for $k:=d-\ell$, $\cT^{k+1}(K) \setminus
\cT^k(K)=\emptyset$.  Therefore, the tangential depth of $K$ is at most $(d-\ell)$. 
\end{proof}

Therefore, a regular cone $K$ is strongly tangentially exposed iff every cone
in the set $\cT^d(K)$ is tangentially exposed, where $d:= \dim(K)$.
Our next goal is to prove that strongly tangentially exposed closed convex cones are FDC.

\subsection{Proof of the sufficient condition}\label{sec:SuffProof}

We use several technical claims in the proof.  The next proposition immediately
follows from the above definitions.

\begin{proposition}\label{prop:TangentInherits} Tangent cones inherit strong tangential exposure property from the original object. That is,
if $C$ is strongly tangentially exposed, then every $T \in \cT^k(C)$ is strongly tangentially exposed for every nonnegative integer $k$.
\end{proposition}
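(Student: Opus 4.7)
The proof is essentially a definition-chasing argument built on the monotonicity of the tangent-cone operator $\cT$ acting on families of convex sets, so I expect no substantive obstacle; the only care required is to set up the notation cleanly and verify one short induction.

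My plan is the following. Fix a nonnegative integer $k$ and a cone $T \in \cT^k(C)$. By the definition of strong tangential exposure, it suffices to show that $T$ itself and every lexicographic tangent cone of $T$ — that is, every element of $\cT^j(\{T\})$ for every nonnegative integer $j$ — is tangentially exposed. The first observation is the monotonicity property: if $\cA \subseteq \cB$ are two families of closed convex sets, then $\cT(\cA) \subseteq \cT(\cB)$, which is immediate from the definition of $\cT$ as the collection of tangent cones of members of the family.

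Using monotonicity I would then prove by induction on $j$ that
\begin{equation*}
\cT^{j}(\{T\}) \subseteq \cT^{k+j}(\{C\}) \quad \text{for every } j \geq 0.
\end{equation*}
The base case $j=0$ is just the assumption $T \in \cT^k(\{C\})$. For the inductive step, assuming the containment holds at level $j$, applying $\cT$ to both sides and using monotonicity gives
\begin{equation*}
\cT^{j+1}(\{T\}) = \cT\bigl[\cT^{j}(\{T\})\bigr] \subseteq \cT\bigl[\cT^{k+j}(\{C\})\bigr] = \cT^{k+j+1}(\{C\}).
\end{equation*}

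To conclude, since $C$ is strongly tangentially exposed, every cone in $\cT^{m}(\{C\})$ is tangentially exposed for every nonnegative integer $m$. In particular this holds for $m=k+j$, so every member of $\cT^{k+j}(\{C\})$ is tangentially exposed, and therefore by the containment just proved every member of $\cT^{j}(\{T\})$ is tangentially exposed. Because this holds for all $j \geq 0$ (including $j=0$, giving tangential exposure of $T$ itself), $T$ is strongly tangentially exposed. The only mild point to double-check along the way is that the definition of ``lexicographic tangent cone'' in Section~3 indeed coincides with membership in $\bigcup_{j\geq 0} \cT^{j}(\{T\})$, which follows directly from the recursive construction given after Example~\ref{eg:cubic}.
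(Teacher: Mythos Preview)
Your proposal is correct and is exactly the definition-chase the paper has in mind when it states that the proposition ``immediately follows from the above definitions'' without further argument. Your monotonicity observation and short induction simply make explicit the one-line reasoning the authors omit, so the approaches coincide.
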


\begin{proposition}\label{prop:ExposePointed} Let $K$ be a regular cone in $\R^n$, and let $F\lhd K$ be an exposed face of $K$, $L := \lspan F$. Then for every nonzero $u\in F^* \cap L$ such that $u$ exposes $\{0\}$ as a face of $F$, there exists $g\in K^*$ such that $ u = \Pi_{L} g$.
\end{proposition}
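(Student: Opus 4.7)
The plan is to exhibit an explicit lift of $u$ into $K^*$ by perturbing any preimage of $u$ in $\E^*$ with a large multiple of an exposing functional for $F$. The key obstruction, a possible negativity of the lifted $u$ on parts of $K \setminus F$, is swept away by a compactness argument that crucially uses the hypothesis that $u$ exposes $\{0\}$ as a face of $F$.

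First, since $F \lhd K$ is exposed, pick $w \in K^*$ with $F = K \cap w^{\perp}$. Because $\langle w, x\rangle = 0$ for all $x \in F$ and $w$ is linear, in fact $\langle w, x\rangle = 0$ for all $x \in L = \lspan F$, i.e., $w \in L^{\perp}$. Now choose any representative $\tilde u \in \E^*$ of the class $u \in \E^*/L^{\perp}$; the value $\langle \tilde u, x\rangle$ for $x \in L$ is independent of the choice of representative. I would then set
\[
g := \tilde u + \lambda w
\]
and aim to prove that $g \in K^*$ for all sufficiently large $\lambda > 0$. Since $\lambda w \in L^{\perp}$, this $g$ automatically satisfies $\Pi_{\E^*/L^{\perp}}(g) = u$, as required.

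The main work is to verify $g \in K^*$ for suitable $\lambda$. By homogeneity, it suffices to check $\langle g, x\rangle \geq 0$ on $S := K \cap \{x : \|x\| = 1\}$, which is compact because $K$ is regular (pointed and closed). On $S_F := F \cap S$ we have $\langle w, x\rangle = 0$ and, by the assumption that $u$ exposes $\{0\}$ as a face of $F$, $\langle \tilde u, x\rangle > 0$; so there is no issue here for any $\lambda \geq 0$. On $S \setminus S_F$ we have $\langle w, x\rangle > 0$, so the condition $\langle \tilde u + \lambda w, x\rangle \geq 0$ rearranges to $\lambda \geq -\langle \tilde u, x\rangle / \langle w, x\rangle$, which is a vacuous constraint whenever $\langle \tilde u, x\rangle \geq 0$.

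Thus the only thing to show is that
\[
M := \sup\left\{ \frac{-\langle \tilde u, x\rangle}{\langle w, x\rangle} \,:\, x \in S,\ \langle \tilde u, x\rangle < 0 \right\}
\]
is finite; any $\lambda \geq M$ then yields $g \in K^*$. I would prove this by contradiction: if $M = \infty$, take $x_n \in S$ with $\langle \tilde u, x_n\rangle < 0$ and $-\langle \tilde u, x_n\rangle / \langle w, x_n\rangle \to \infty$. Since $\langle \tilde u, \cdot\rangle$ is bounded on the compact set $S$, we must have $\langle w, x_n\rangle \to 0$. Passing to a convergent subsequence $x_n \to x^* \in S$, continuity gives $\langle w, x^*\rangle = 0$, hence $x^* \in S_F$. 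But then $\langle \tilde u, x^*\rangle > 0$ by the exposure hypothesis, contradicting $\langle \tilde u, x^*\rangle = \lim_n \langle \tilde u, x_n\rangle \leq 0$. This is the one nontrivial step; once it is in hand, the construction of $g$ is immediate.
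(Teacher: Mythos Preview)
Your argument is correct and is essentially the same as the paper's: both pick an exposing functional $w$ (the paper calls it $s$), set $g=\tilde u+\lambda w$, restrict to the unit sphere in $K$, and use compactness to show that any sequence obstructing $g\in K^*$ would accumulate at a unit vector in $F$ where $u$ is strictly positive, a contradiction. The only cosmetic difference is that you package the contradiction as finiteness of the supremum $M$, whereas the paper assumes directly that $g_\alpha\notin K^*$ for all $\alpha$ and extracts a limit point; the substance is identical.
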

\begin{proof}
Let $K,F,$ and $L$ be as above, and let $u\in F^* \cap L$ be such that $\iprod{u}{x} > 0,$ $\forall x \in F \setminus \{0\}$. Without loss of generality, we may assume $\|u\|=1$.
Since $F$ is an exposed proper face of $K$, there exists $s \in K^*$ such that
\[
\iprod{s}{x} \left\{\begin{array}{rl}
= 0, & \textup{ if } x \in F;\\
> 0, & \textup{ if } x \in K\setminus F.
\end{array}
\right.
\]
Let $g_{\alpha}:= u + \alpha s,$ $\alpha \in \R$.  If there exists $\alpha$ such that $g_{\alpha} \in K^*$, then
we are done.  So, we may assume that for every $\alpha \in \R$, there exists $x_{\alpha} \in K$ such that
\[
0 > \iprod{g_{\alpha}}{x_{\alpha}} = \iprod{u}{x_{\alpha}} + \alpha \iprod{s}{x_{\alpha}}.
\]
Since $K$ is a cone, we can choose $x_{\alpha}$ to be unit norm.  Now, as $\alpha \to +\infty$,
the sequence $\left\{x_{\alpha}\right\}$ must have a convergent subsequence with limit $\bar{x} \in K$ which also has norm 1.
If $\iprod{s}{\bar{x}} > 0$, then using
\[
-1 \leq -\|u\| \|x_{\alpha}\| \leq \iprod{u}{x_{\alpha}} < -\alpha \iprod{s}{x_{\alpha}}
\]
and taking limits as $\alpha \to +\infty$ along the subsequence of $\{x_{\alpha}\}$ converging to $\bar{x}$, we reach a contradiction.
Hence, we may assume $\iprod{s}{\bar{x}} = 0$, i.e., $\bar{x} \in F$. Applying the above limit argument with this new information, we conclude $\iprod{u}{\bar{x}} \leq 0$.
Thus, by our choice of $u$, $\bar{x} = 0$, again leading to a contradiction.  Therefore, there exists $\alpha$ such that $g_{\alpha} \in K^*$, and
we are done.
\end{proof}

Next, we observe that FDCness and strong tangential exposedness are not affected by addition or removal of subspaces.

\begin{proposition}\label{prop:ReduceDimension} Let $K = C+L$, where $L$ is a 
linear subspace and $C$ is a closed convex cone such that $\lspan C \subseteq  L^\perp$. Then the following statements are true.
	\begin{itemize}
		\item[(i)] The cone $K$ is strongly tangentially exposed if and only if $C$ is;
		\item[(ii)] The cone $K$ is FDC if and only if $C$ is.
	\end{itemize}
\end{proposition}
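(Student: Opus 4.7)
The plan is to set up a structural dictionary translating faces, tangent cones, dual cones, and annihilators of $K$ into those of $C$, after which both (i) and (ii) follow by direct bookkeeping. First, working in an orthogonal decomposition $\E = L^{\perp} \oplus L$, I reduce without loss of generality to the case $C \subseteq L^{\perp}$ (by replacing $C$ with $K \cap L^{\perp}$ if necessary; the relevant intrinsic properties of $C$ are not affected). The key structural observations are: (a) $L$ lies in the lineality space of $K$ and hence in every nonempty face of $K$, so the assignment $G \mapsto G + L$ is a bijection between proper faces of $C$ and proper faces of $K$; (b) for $x = c + \ell \in K$ with $c \in C$ and $\ell \in L$, $\mT(x; K) = \mT(c; C) + L$; (c) $K^{*} = C^{*} \cap L^{\perp}$ and $(G + L)^{\perp} = G^{\perp} \cap L^{\perp}$, with both sitting inside $L^{\perp} \subseteq \E^{*}$.

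For (ii), combining (a) and (c) yields
\[
K^{*} + (G+L)^{\perp} = C^{*} + G^{\perp}
\]
as subsets of $L^{\perp}$. Via the face bijection, closedness of the left-hand side for every proper face of $K$ is equivalent to closedness of the right-hand side for every proper face of $C$, which is exactly the FDC condition for $C$.

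For (i), I argue by induction on tangential depth that the entire dictionary persists. Since $L$ is contained in the lineality of $K$ it remains in the lineality of every lexicographic tangent cone $S + L$ of $K$, and so the face/tangent-cone dictionary reapplies with $S + L$ in place of $K$ and $S$ in place of $C$. Consequently, the lexicographic tangent cones of $K$ are precisely the sets $S + L$ for $S$ a lexicographic tangent cone of $C$, their proper faces are $H + L$ for $H \lhd S$, and for $x = h + \ell$ one has $\mT(x; S+L) = \mT(h; S) + L$ and $\mT(x; H+L) = \mT(h;H)+L$. The tangential exposure condition for $S + L$ at $x = h + \ell$ with respect to $H + L$ then reads
\[
(\mT(h;S) + L) \cap (\lspan H + L) = \mT(h; H) + L,
\]
and since $\mT(h;S)$, $\lspan H$, and $\mT(h;H)$ all lie in $L^{\perp}$, uniqueness of the $L^{\perp} \oplus L$ decomposition rewrites the left-hand side as $(\mT(h;S) \cap \lspan H) + L$. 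Thus the condition is equivalent to tangential exposure of $S$ at $h$ with respect to $H$. Iterating this equivalence over every lexicographic tangent cone yields (i).

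The main---and admittedly minor---technical point is verifying the persistence of the dictionary under iteration: that $L$ indeed sits inside the lineality of every lexicographic tangent cone of $K$, and that for each such cone the face decomposition $F = (F \cap S) + L$ holds so that the identifications (a)--(c) propagate. Both are immediate by induction from the base-case dictionary, and once set up, (i) and (ii) are essentially formal.
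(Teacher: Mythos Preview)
Your argument is correct and is precisely the elaboration the paper omits: the paper's own proof consists of two lines, referring (i) to the proof of Theorem~\ref{thm:tangentialdepth} and declaring (ii) immediate from definitions, so your face/tangent/dual dictionary and its persistence under iteration are exactly what is being left to the reader. One small caveat on your WLOG step: replacing $C$ by $K\cap L^{\perp}$ preserves the intrinsic properties of $C$ because $\Pi_{L^{\perp}}$ restricts to a linear isomorphism on $\lspan C$, and this uses $\lspan C \cap L = \{0\}$ --- the genuine direct-sum situation that the paper intends and uses (cf.\ the dimension counts in the proofs of Theorems~\ref{thm:tangentialdepth} and~\ref{thm:SuffFDC}) --- which is marginally stronger than the literal hypothesis $C\cap L = \{0\}$; under the weaker hypothesis alone the projection need not be injective on $C$ and the reduction would require an extra word.
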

\begin{proof} For any $x\in K$ and its unique projection $p$ onto $C$ we have 
$$
\mT(x;K) = \mT(p;K); \quad \mT(x;E) = \mT(p;E)\quad \forall E \lhd K; 
$$	
moreover, observing that the faces of $C$ and $K$ are in bijective correspondence with each other ($F\lhd C$ if and only if $F+L \lhd K$), and that
\begin{align*}
\mT(x;K) & = \mT(p;C)+L,\\
\mT(x;F+L) & = \mT(p;F)+L\quad \forall F \lhd C,\\
\lspan (F+L) & = \lspan (F)+L \quad \forall F \lhd C,
\end{align*}
we obtain (i) directly from the definition of tangential exposure. 
	
Proof of (ii) likewise follows from the definitions and fundamental properties.
\end{proof}

Now, we are ready to prove our sufficient condition for FDCness.
\begin{theorem}[Sufficient condition]\label{thm:SuffFDC} If a closed convex cone $K\subseteq \R^n$ is strongly tangentially exposed, then it is facially dual complete.
\end{theorem}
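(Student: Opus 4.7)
The plan is to proceed by induction on $\dim K$. Using Proposition~\ref{prop:ReduceDimension}(i,ii), I first reduce to the case where $K$ is pointed: if $K$ has nontrivial lineality, then its pointed part $C$ is strongly tangentially exposed of strictly smaller dimension, hence FDC by the induction hypothesis, which yields FDC of $K$ via Proposition~\ref{prop:ReduceDimension}(ii). So assume $K$ is pointed (hence regular in its linear span).

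By Proposition~\ref{prop:CharNice}, it suffices to show that every $u \in F|_{L}^*$, for an arbitrary proper face $F \lhd K$ with $L := \lspan F$, is of the form $u = \Pi_{\E^*/L^\perp}(g)$ for some $g \in K^*$; abbreviate $\Pi := \Pi_{\E^*/L^\perp}$ and let $G := F \cap u^\perp$ be the face of $F$ exposed by $u$. When $G = \{0\}$, strong tangential exposure implies facial exposure of $F$ in $K$, and Proposition~\ref{prop:ExposePointed} delivers the lift at once.

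The substantive case is $G \neq \{0\}$. Fix $x \in \relint G$ and form $T := \mT(x;K)$. Since $\lspan G \subseteq \lin T$ and $\dim G \geq 1$, the pointed quotient $T / \lin T$ has dimension strictly less than $\dim K$, and it is strongly tangentially exposed by Propositions~\ref{prop:TangentInherits} and~\ref{prop:ReduceDimension}(i). The induction hypothesis gives FDC of $T / \lin T$, which upgrades to FDC of $T$ via Proposition~\ref{prop:ReduceDimension}(ii).

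To finish, I lift $u$ into $T^* \subseteq K^*$ using FDC of $T$. Let $T_F := \mT(x; F)$, which equals $T \cap L$ by tangential exposure of $K$. Since $\iprod{u}{x} = 0$ and $u \geq 0$ on $F$, a direct check shows $u \geq 0$ on $\cone(F - x)$ and therefore on $T_F$, so $u \in T_F|_{L}^*$. The main obstacle is that $T_F$ need not itself be a face of $T$, so FDC of $T$ does not apply to $T_F$ verbatim. To circumvent this, I work with $H := \face(T_F; T)$: the inclusions $T_F \subseteq H \subseteq T$ combined with $T \cap L = T_F$ force $L \cap H = T_F$, while Proposition~\ref{prop:ff}(iii) supplies a point of $\relint T_F \cap \relint H$, which automatically lies in $\relint L \cap \relint H$ because $\relint L = L$. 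Proposition~\ref{prop:NormalIntersection} then yields $T_F^* = L^\perp + H^*$ without closure. Tangential exposure of $T$ applied to the face $H$ at the origin gives $T \cap \lspan H = H$, so $H^* = \cl(T^* + H^\perp)$, and FDC of $T$ at $H$ removes the closure, producing $H^* = T^* + H^\perp$. Since $L \subseteq \lspan H$ forces $H^\perp \subseteq L^\perp$, the two identities collapse to $T_F^* = T^* + L^\perp$, i.e., $\Pi(T^*) = T_F|_L^*$. The desired $g \in T^* \subseteq K^*$ therefore exists, closing the induction.
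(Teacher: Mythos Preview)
Your proof is correct and follows essentially the same strategy as the paper's: induct on dimension, pass to the tangent cone $T=\mT(x;K)$ at a point $x$ in the relative interior of the face $E=F\cap u^\perp$, use strong tangential exposure plus the induction hypothesis (via Propositions~\ref{prop:TangentInherits} and~\ref{prop:ReduceDimension}) to conclude $T$ is FDC, and then lift $u$ through the minimal face $H=\face(T_F;T)$ using Proposition~\ref{prop:ff}(iii) and Proposition~\ref{prop:NormalIntersection}. Your packaging is slightly cleaner in that you collapse the paper's two-step lift (first into $H^*$, then into $T^*$) into the single identity $T_F^{*}=T^{*}+L^{\perp}$; one minor remark is that the step ``tangential exposure of $T$ at the origin gives $T\cap\lspan H=H$'' is actually automatic for any face $H$ of any closed convex cone $T$, so that invocation is harmless but unnecessary.
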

\begin{proof} We will prove the statement by induction in the dimension $n$ of the underlying space $\R^n$. Observe that for $n=1$ the statement is trivial: all three possible, at most one-dimensional,
nonempty, closed convex cones are both strongly tangentially exposed and facially dual complete.

Assume now that every closed convex cone of dimension at most $(n-1)$ that is strongly tangentially exposed is also FDC.  We will prove the statement for $n$-dimensional closed convex cones.
Let $K\subseteq \R^n$ be a strongly tangentially exposed closed convex cone. To prove that $K$ is FDC, by Proposition~\ref{prop:CharNice} it suffices to show that
for all $F\lhd K$, with $L := \lspan F$, for every $u\in F^* \cap L$, we have $u\in \Pi_{L}K^*$.

Let $u \in F^* \cap L$, we may assume $u$ is not zero, and define
\[
E:= \left\{x \in F: \,\, \iprod{u}{x}=0 \right\}.
\]
Observe that $E\lhd F\lhd K$, since $u$ defines a supporting hyperplane to $F$ at origin, and any sub-face of a face is also a face (see Proposition~\ref{prop:ff}), if $E=\{0\}$, the result follows from Proposition~\ref{prop:ExposePointed}. Otherwise $\dim E \geq 1$. Let $x\in \relint E$ and consider $\mT(x;K)$ and $\mT(x;F)$. Observe that $\lspan E \subset \mT(x;F) \subset \mT(x;K)$, so that our cones decompose into a direct sum:
$$
\mT(x;K) = C+\lspan E,
$$
where $C \subseteq (\lspan E)^\perp$. Notice that since $\dim E\geq 1$, we have $\dim C \leq n-1$.
	
By Proposition~\ref{prop:TangentInherits}, the cone $\mT(x;K)$ inherits strong tangential exposedness property from $K$. Applying Proposition~\ref{prop:ReduceDimension} (i) to $\mT(x;K)$ and $C$, we deduce that $C$ is strongly tangentially exposed as well, and since the dimension of $C$ is less than $n$, it is FDC by the induction hypothesis. Applying Proposition~\ref{prop:ReduceDimension} (ii) to $\mT(x;K)$ and $C$, we deduce that $\mT(x;K)$ is facially dual complete. 

We consider two cases based on whether $\mT(x;F)$ is a face of $\mT(x;K)$ or not.

\underline{Case 1:}
$\mT(x;F)$ is a face of $\mT(x;K)$. Then from the FDCness of $\mT(x;K)$ there exists $g\in (\mT(x;K))^* \subset K^*$ such that with $L= \lspan \mT(x;F) = \lspan F$, $u = \Proj_{L} g$, and we are done.
	
\underline{Case 2:}
$\mT(x;F)$ is not a face of $\mT(x;K)$. Then consider the minimal face $G\lhd \mT(x;K)$ that contains $\mT(x;F)$. By the property of minimal faces in Proposition~\ref{prop:ff}~(iii) we have
\[
\relint \left[\mT(x;F)\right] \cap \relint G\neq \emptyset,
\]
and therefore
\[
\left\{\relint \lspan \left[ \mT(x;F)\right]\right\} \cap \relint G \neq \emptyset.
\]
Applying Proposition~\ref{prop:NormalIntersection} to $\left[\lspan \mT(x;F)\right]$ and $G$, we have
\begin{equation}\label{eq:NormalRelation}
\left\{\left[\lspan \mT(x;F)\right]\cap G\right\}^* = G^* +\left[\mT(x;F) \right]^{\perp}.
\end{equation}
From the strong tangential exposure assumption we have
\[
\mT(x;F) = \mT(x;K)\cap \lspan \mT(x;F),
\]
and since $\mT(x;F)\subseteq G \subseteq \mT(x;K)$, this yields
\begin{equation}\label{eq:TangSubFace}
\left[\lspan \mT(x;F) \right] \cap G = \mT(x;F).
\end{equation}
From \eqref{eq:NormalRelation} and \eqref{eq:TangSubFace} we have:
\begin{equation}\label{eq:TangSum}
\left[\mT(x;F)\right]^*  = G^* + \left[\mT(x;F)\right]^\perp.
\end{equation}
Furthermore, since $G^*$ is closed, and $[\lspan G]^\perp \subset G^*$,  we have 
$$
G^* = G^*\cap \lspan G + [\lspan G]^\perp.
$$
Using this observation together with  $[\lspan G]^\perp \subseteq [\mT(x;F)]^\perp$, we obtain from \eqref{eq:TangSum}
$$
\left[\mT(x;F)\right]^* = G^*\cap \lspan G + \left[\mT(x;F)\right]^\perp.
$$
By our choice of $x$ we have $u\in [\mT(x;F)]^*$, hence, $u$ is the orthogonal projection of some $g\in G^* \cap \lspan G$ onto $\lspan \mT(x;F)$.

Since $G$ is a face of $\mT(x;K)$, and $\mT(x;K)$ is FDC, we can now find a point $g'$ in $(\mT(x;K))^* \subset K^*$ that projects onto $\lspan G$ as $g$. 

Now $g$ is the orthogonal projection of $g'\in K^*$ onto $\lspan G$, and $u$ is the orthogonal projection of $g$ onto $\lspan F \subseteq \lspan G$. Hence $u = \Pi_{\lspan F}(g')\in \Pi_{\lspan F} K^*$.
\end{proof}

The sufficient condition for FDCness is not necessary, as is evident from the next example.

\begin{example}\label{eg:lost-tangent} Let $K= \cone\{C\times \{1\}\}\subset \R^4$, where $C\subset \R^3$ is a closed convex set, $C := \conv\{\gamma_1,\gamma_2\}$,
$$
\gamma_1(t) = (\cos t, \sin t, 1), \; t\in [0,\pi/2], \qquad \gamma_2(t) = (\cos t, \sin t, -1)\; t\in [0,\pi].
$$
The set $C$ is shown in Fig.~\ref{fig:facialexp-not-preserved}.
\begin{figure}[ht]
	\centering
	\includegraphics[scale=1]{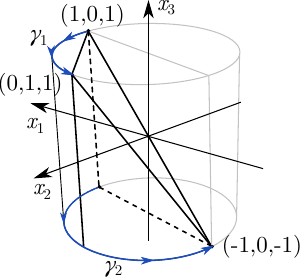}
	\qquad \qquad
	\includegraphics[height = 120pt]{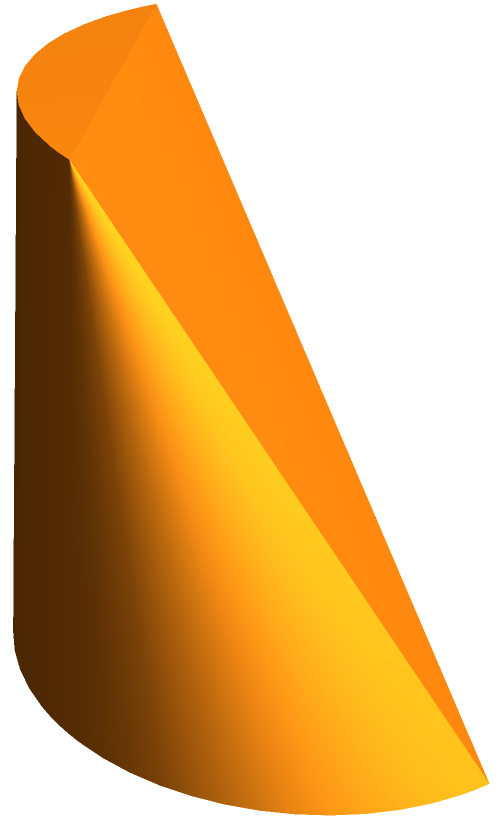}
	\caption{Construction of Example 3: A facially exposed set may have a tangent that is not facially exposed}\label{fig:facialexp-not-preserved}
\end{figure}
Observe that the set $C$ is tangentially (and facially) exposed. However, strong tangential exposure fails for this set. In particular,  $\mT(\bar x;C)$, where $\bar x = (0,1,1)$ is not facially exposed (see its Mathematica rendering in the first image of Fig.~\ref{fig:tangent-projections}),
\begin{figure}[ht]
	\centering
	\includegraphics[height=120pt]{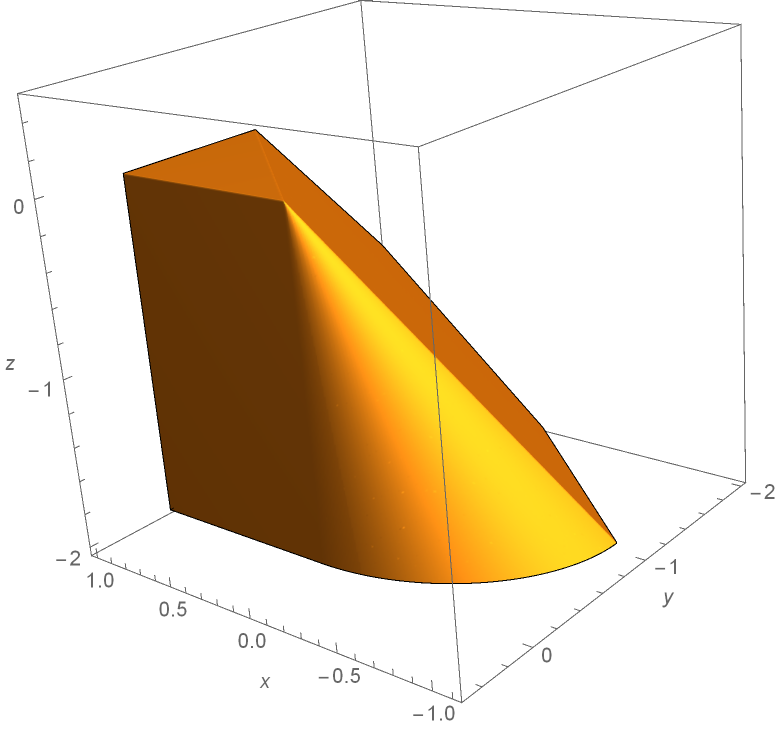}
	\qquad
	\includegraphics[height = 120pt]{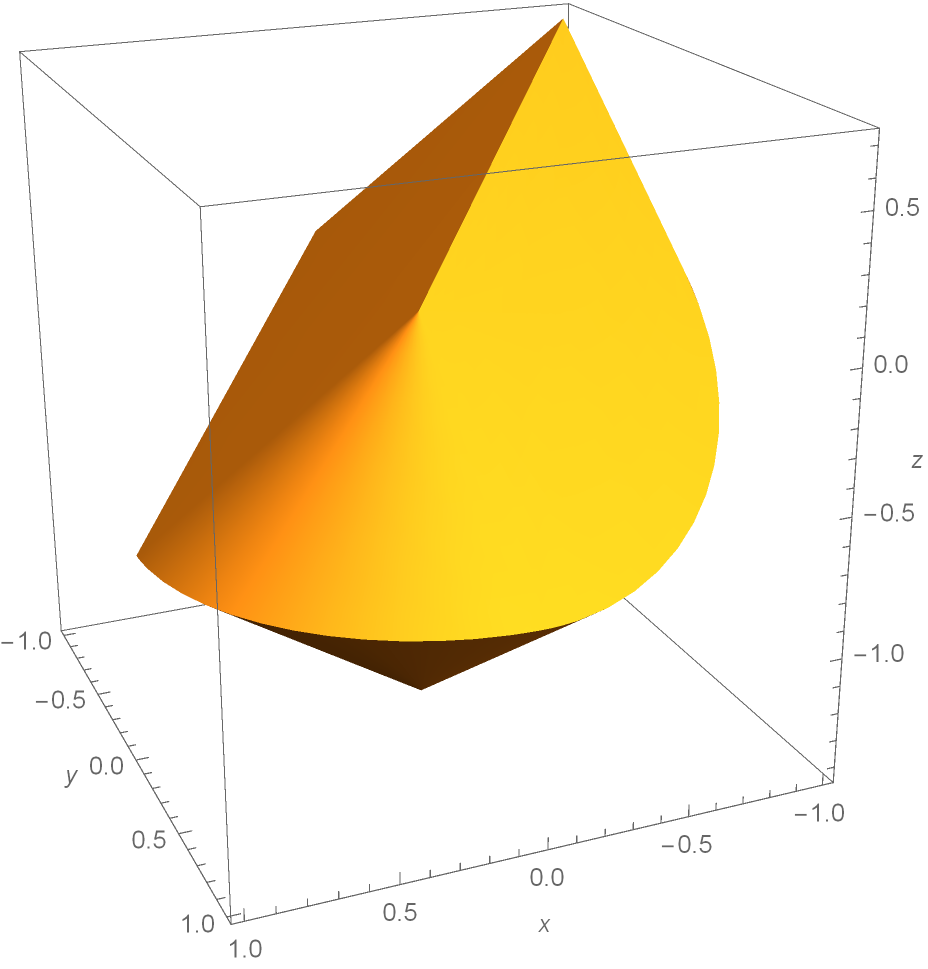}
	\qquad
	\includegraphics[height = 120pt]{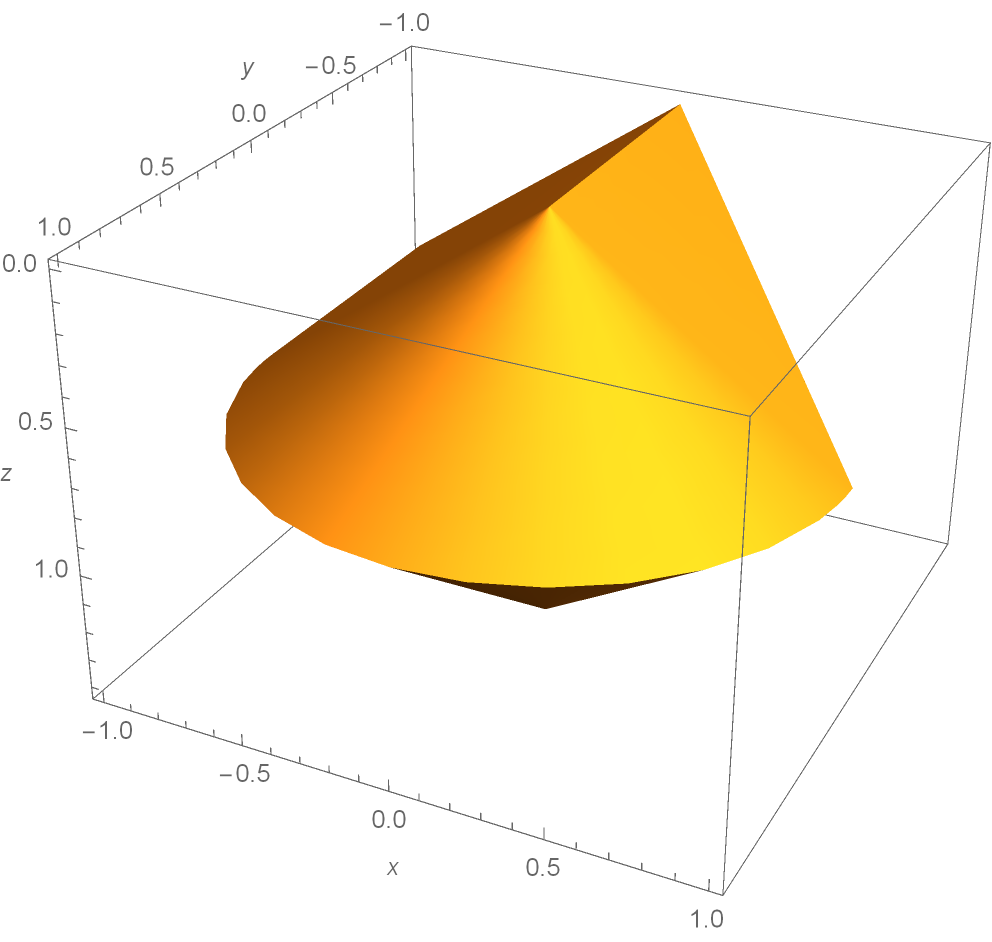}
	\caption{Tangent cone of the cone from Example 3 at $\bar{x}:=(0,1,1)$.  This tangent cone is not facially exposed and the right-most pictures
	illustrate two closed convex sets whose conic hulls represent the projections of the dual cones on the relevant subspaces.}\label{fig:tangent-projections}
\end{figure}
and hence it is not tangentially exposed either. At the same time this cone is facially dual complete. In this case we only need to check the identity $\Pi_{\lspan F}(F^\perp +K^*) = F^* \cap \lspan F$ for the faces of $K$ that correspond to the top and bottom faces of $C$, and for both cases the relevant projections are the conic hulls of three dimensional sets shown in the last two images in Fig.~\ref{fig:tangent-projections}. We provide all relevant technical computations in the Appendix.
\end{example}

\section{Conclusion}

We provided tighter, geometric, primal characterizations of facial dual completeness of regular convex cones
via tangential exposure property and strong tangential exposure property.  In Figure~\ref{fig:Tablet2} we
present a schematic summary of our results.  Each bubble in the figure corresponds to a property
of convex cones (facial exposedness, facial dual completeness, etc.).  A solid arrow from one bubble
to another bubble illustrates the fact that the former property implies the latter (labels on solid arrows indicate
where such a result was proved first; if the implication is trivial, the solid arrow has no label).
A dashed arrow which is blocked indicates that proving the underlying implication is impossible
(dashed, blocked arrows are labeled by a corresponding example proving this claim).

Our results provide geometric
tools for checking FDCness directly on the primal cone.  However, we do not provide any provably efficient
algorithmic tools for checking these properties.  A related problem is whether  Ramana's Extended Lagrange-Slater
Dual (ELSD) construction \cite{Ramana1997} can be extended to tangentially exposed cones.  Some sufficient
conditions for generalizing this construction were discussed in \cite{TW2012} and a geometric extension of ELSD
to FDC cones was established in \cite{Pataki2013}.  The cone of positive semidefinite matrices
as well as any regular convex cone that can be expressed as the intersection of some positive semidefinite cone and a
linear subspace is strongly tangentially exposed.  Also, there are strongly tangentially exposed regular convex cones that
are not semi-algebraic sets.  The problems of characterizing the set of tangentially exposed convex cones and
characterizing the set
of strongly tangentially exposed convex cones are left for future research.

As a by-product of our approach, we have introduced some new notions of exposure
for faces of closed convex sets:
\begin{enumerate}[(i)]
\item
tangentially exposed convex sets
\item
convex sets with facially exposed tangent cones
\item
convex sets with every lexicographic tangent cone facially exposed
\item
strongly tangentially exposed convex sets.
\end{enumerate}

We can also apply these notions to the polars of convex sets.  Also, we can
ask for characterizations of closed convex sets $C$ such that $C$ and $C^{\circ}$ have
a specific property (or a specific pair of the properties) from the above list.

\begin{figure}[ht]
	\centering
\begin{overpic}[scale=1
]{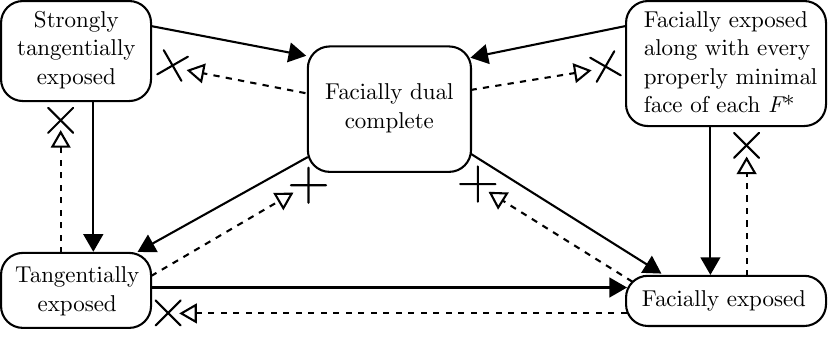}
 \put (15,19) {Theorem~\ref{thm:NecFDC}}
 \put (25,39) {Theorem~\ref{thm:SuffFDC}}
 \put (60,39) {Pataki \cite{PatakiFexpNice}}
 \put (66,19) {Pataki \cite{PatakiFexpNice}}
 \put (42,1) {Example~\ref{eq:one}}
 \put (51,13) {Example~\ref{eq:one}}
 \put (3,13) {\rotatebox{90}{Example~\ref{eg:cubic}}}
 \put (21,28) {Example~\ref{eg:cubic}}
 \put (31,13) {Example~\ref{eg:lost-tangent}}
 \put (60,28) {Example~\ref{eg:lost-tangent}}
 \put (40,9) {Proposition~\ref{prop:texpexp}}
 \put (93,11) {\rotatebox{90}{Example~\ref{eg:lost-tangent}}}
\end{overpic}
	\caption{A schematic summary of main results of this paper and their relation to other prior results.}
	\label{fig:Tablet2}
\end{figure}

\bibliographystyle{plain}
\bibliography{refs}

\section*{Appendix}

The goal of this section is to demonstrate that the cones in Examples~\ref{eg:cubic} and \ref{eg:lost-tangent} satisfy the claimed properties. We use a substantial number of technical results which are listed below and precede the main statements (Propositions~\ref{prop:eg2} and~\ref{prop:eg3}). In some of the proofs we only provide the ideas behind the computations, so that the tedious technical details can be reconstructed using the basic tools of linear algebra and real analysis.

\begin{proposition}\label{prop:expint} Suppose that $E = F\cap G$, where $F$ and $G$ are exposed faces of a closed convex set $C\subset \R^n$. Then $E$ is an exposed face of $C$.
\end{proposition}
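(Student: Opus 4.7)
The plan is to construct an exposing linear functional for $E$ by adding exposing functionals for $F$ and $G$, which is a classical trick. Since $F$ is exposed, there exist $u\in \R^n$ and $\alpha\in\R$ such that $\langle u,x\rangle \leq \alpha$ for every $x\in C$, with equality precisely on $F$. Similarly, there exist $v\in\R^n$ and $\beta\in\R$ with $\langle v,x\rangle \leq \beta$ on $C$ and equality precisely on $G$. I would then consider the functional $w:=u+v$ and the value $\gamma := \alpha+\beta$.

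The key step is to verify that $w$ exposes $E$. Clearly $\langle w,x\rangle \leq \gamma$ for every $x\in C$ by adding the two inequalities. Conversely, if $\langle w,x\rangle = \gamma$ for some $x\in C$, then since $\langle u,x\rangle \leq \alpha$ and $\langle v,x\rangle \leq \beta$, both must hold with equality, which places $x$ simultaneously in $F$ and $G$, hence in $E$. So the contact set $C\cap\{x:\langle w,x\rangle=\gamma\}$ is exactly $E$.

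The only caveat is whether $\{x:\langle w,x\rangle=\gamma\}$ is genuinely a supporting hyperplane to $C$, i.e., whether it meets $C$ at all. If $E\neq\emptyset$, pick any $x_0\in E$; then $\langle w,x_0\rangle = \alpha+\beta = \gamma$, so the hyperplane does touch $C$ and exposes $E$. If $E=\emptyset$, then under the convention adopted in the paper (where the empty set is listed as a face of $C$ and is trivially exposed, e.g., by a hyperplane disjoint from $C$), there is nothing further to prove. There is no real obstacle here; the result is essentially immediate once one writes down the sum of the two exposing functionals, and I would present it in a few lines.
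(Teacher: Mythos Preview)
Your argument is correct and is essentially identical to the paper's proof: both take exposing functionals $p_F,p_G$ for $F$ and $G$, form their sum, and observe that the sum attains its maximum over $C$ exactly on $F\cap G$. The only cosmetic difference is that you phrase things via $\langle u,x\rangle\le\alpha$ while the paper uses $\Argmax$, and you add a remark about the degenerate case $E=\emptyset$ that the paper omits.
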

\begin{proof} Since both $F$ and $G$ are exposed, there exist $p_F,p_G\in \R^n$ such that
$$
\Argmax_{x\in C}\langle p_F, x \rangle = F, \qquad \Argmax_{x\in C}\langle p_G, x \rangle = G.
$$
Denote
$$
m_F := \max_{x\in C}\langle p_F, x \rangle, \qquad m_G := \max_{x\in C}\langle p_G, x \rangle.
$$
Let $p_E:= p_F+p_G$. We have
$$
\langle p_E, x \rangle = \langle p_F, x \rangle+ \langle p_G, x \rangle< m_F+m_G \quad \forall x\in C\setminus (F\cap G);
$$
$$
\langle p_E, x \rangle = \langle p_F, x \rangle+ \langle p_G, x \rangle= m_F+m_G \quad \forall x\in E= F\cap G.
$$
Hence,
$$
\Argmax_{x\in C}\langle p_E,x\rangle = E,
$$
and therefore $E$ is an exposed face of $C$.
\end{proof}

\begin{proposition}\label{prop:boundary} Let $C$ be a compact convex set with a nonempty interior, and let $\mathcal H$ be a collection of half-spaces that contain $C$. If for every point on the boundary of $C$ there is at least one half-space $H\in \mathcal{H}$ whose boundary hyperplane contains this point, then
$$
C = \bigcap_{H\in \mathcal{H}} H.
$$
\end{proposition}
\begin{proof} Assume the contrary, i.e. the conditions of the proposition are satisfied, but there is a point $x\in (\bigcap_{H\in \mathcal{H}} H)\setminus C$. Since $\inte C\neq \emptyset$, there is some $y\in \inte C$. The line segment $[x,y]$ intersects the boundary of $C$ at a unique point $z\in (x,y)$ (see \cite[Remark 2.1.7]{JBHU}). For some $H\in \mathcal{H}$ there is a boundary hyperplane that contains $z$. The half-space must have $y$ in its interior, hence $x\notin H$, and therefore $x\notin \bigcap_{H\in \mathcal{H}} H$, a contradiction.
\end{proof}

\begin{proposition}\label{prop:relint} Let $\mathcal{F}$ be a collection of proper faces of a compact convex set $C\subset \R^3$, $\inte C \neq \emptyset$. If there exists a homeomorphism $\phi$ from the union $U$ of the relative interiors of the sets in $\mathcal F$,
$$
U = \bigcup_{F\in \mathcal{F}} \relint F
$$ 
to the Euclidean sphere $S_2$, then the collection $\mathcal{F}$ contains all nonempty proper faces of $C$. 
\end{proposition}
\begin{proof}
It is not difficult to construct a homeomorphism $\psi$ between the boundary of $C$ and the unit sphere. This can be done by choosing an arbitrary point $c\in \inte C$ and identifying each point $u$ on the boundary of $C$  with the point $p = (u-c)/\|u-c\|$. This mapping is continuous, and since the intersection of the ray $c+\cone p$ with the boundary of $C$ is unique (see \cite[Remark~2.1.7]{JBHU}), it is also a bijection, hence the mapping $\psi$ is indeed a homeomorphism. 

We can compose the inverse of the homeomorphism $\phi$ (from the assumption) with $\psi$ to obtain another homeomorphism $\psi \circ \phi^{-1}$ that maps the unit sphere to its subset. If there exists a point on the boundary of $C$ that is not in $U$, then the set 
$$
\psi (\phi^{-1}(S_2))
$$ 
is a proper subset of the sphere. This is impossible by the standard argument involving the stereographic projection and Borsuk-Ulam Theorem: if such homeomorphism existed, it is easy to construct another homeomorphism between the sphere and the Euclidean subspace of the same dimension by rotating the sphere and considering the stereographic projection. Being a homeomorphism, this is a continuous mapping, which by Borsuk-Ulam Theorem has to have coincident images of two antipodal points.
\end{proof}

\begin{proposition}\label{prop:explift} Let $C$ be a compact convex set in $\R^n$ and let $K$ be its lifting to $\R^{n+1}$,  $K: = \cone \{C\times \{1\}\}$. The set $C$ is facially (tangentially) exposed if and only if $K$ is.
\end{proposition}
\begin{proof} The facial exposure part was proven in \cite[Proposition 3.2]{Roshchina}. The tangential exposure can be shown in a similar fashion, using the face correspondence given in \cite[Proposition 3.1]{Roshchina}.
\end{proof}

\begin{proposition}\label{prop:twodim} If a closed convex set $C\subset \R^n$ is facially exposed, then all zero- and one-dimensional faces of $C$ are tangentially exposed, i.e.
\begin{equation}\label{eq:TExp01}
\lspan (F-x)\cap \mT(x;C) = \mT(x;F)\quad \forall \, x\in F, \quad \forall\, F, \quad \dim F<2.
\end{equation}
\end{proposition}
\begin{proof} Observe that all zero-dimensional faces are tangentially exposed due to the triviality of the relevant linear span, so we only need to prove the statement for one-dimensional faces.

Assume that there exists a face $[u,v]$, $u\neq v$ of a closed facially exposed set $C$ such that $[u,v]$ is not tangentially exposed.

This means that there exists $x\in [u,v]$ that violates \eqref{eq:TExp01}. Observe that $x\notin (u,v)$, as for the points in the relative interior of the interval we have $\mT(x;[u,v]) = \lspan(u-x)$, and property \eqref{eq:TExp01} holds trivially. Without loss of generality we assume that $x=u$.

There exists a sequence $\{x_k\}$ such that $x_k\to u$, $x_k\in C$,
$$
p_k:= \frac{x_k-u}{\|x_k-u\|} \to p \in (\mT(x;C)\cap \lspan \{v-u\} )\setminus \mT(u;F).
$$
Observe that from $p\notin \mT(u;F) = \cone \{v-u\}$, $p\in \lspan \{v-u\}$, $\|p\|=1$ we deduce that
$$
p = \frac{u-v}{\|u-v\|}.
$$
Since $\{u\}$ is an exposed face of $C$, there exists a normal $q\in \R^n$ such that
$$
\langle q, u\rangle > \langle q, x\rangle \quad \forall x\in C.
$$
We therefore have
$$
\langle q, p \rangle = \lim_{k\to \infty } \frac{\langle q, x_k-u\rangle }{\|x_k-u\|} \leq 0,
$$
and on the other hand
$$
\langle q, p \rangle = \frac{\langle q, u-v\rangle }{\|u-v\|}>0,
$$
a contradiction.
\end{proof}

\begin{proposition} Let $F$ be a two-dimensional face of a three-dimensional compact convex set $C$. If for each $x\in F$ and each $q\in \mN(x;F)\cap \lspan (F-x)$ there exists a corresponding normal $h\in \mN(x;C)$ that projects onto the linear span of $F-x$ as $q$, then $F$ is tangentially exposed.
\end{proposition}
\begin{proof}
Suppose that $F$ is not tangentially exposed. This implies that there exists $x\in F$ and a sequence  $\{x_k\}$, $x_k\to x$, $x_k\in C$ such that
$$
p_k = \frac{x_k-x}{\|x_k-x\|}\to p\in (\mT(x;C)\cap \lspan (F-x) )\setminus \mT(x;F).
$$

Since $p\in\lspan (F-x)\setminus \mT(x;F)$, there must be a normal $q\in \mN(x;F)\cap \lspan (F-x)$ such that $\langle p,q\rangle <0$.

If there is a normal $h\in \mN(x;C)$ such that
$$
\Pi_{\lspan F}(h) = q,
$$
then for sufficiently large $k$
$$
\langle x_k-x,h\rangle <0,
$$
which is impossible.
\end{proof}

\begin{proposition}\label{prop:polarlift} Given the representation for our set $C$ as
$$
C = \{ \bar{x} \, : \, \langle p_t, \bar x\rangle\leq d_t, t\in T\},
$$
its lifting is
$$
K = \{ x \, : \, \langle (p_t,-d_t), x\rangle\leq 0, t\in T\},
$$
and the dual cone of the lifting is
$$
K^* = \cl\cone \{(p_t,-d_t)\, :\, t\in T\}.
$$
\end{proposition}
\begin{proof} Straightforward from the definitions.
\end{proof}

\begin{proposition}\label{prop:projection}
Let $L$ be a linear subspace and let $C$ be a closed convex set. The set $L^\perp+C$ is closed iff the projection of $C$ onto $L$ is closed. 
\end{proposition}
\begin{proof} First assume that $\Proj_{L}(C)$ is closed. Consider any sequence $\{x_k\}$ such that $x_k \in (L^\perp+C)$ for all $k\in \N$ and  $x_k \to \bar x$. Then $\Pi_L(x_k) \to \Pi_L(\bar x) \in \Pi_L(C)$ by our assumption. Hence there exists $\bar y\in C$ such that $\Pi_L(\bar x) = \Pi_L(\bar y) $.
We have 
$$
\bar x 
= \Pi_L(\bar x) + (\bar x-\Pi_L(\bar x)) 
= \Pi_L(\bar y) + (\bar x-\Pi_L(\bar x)) 
= \bar y+\underbrace{(\Pi_L(\bar y)-\bar y)}_{\in L^\perp}+\underbrace{(\bar x-\Pi_L(\bar x))}_{\in L^\perp},
$$
hence, $\bar x \in C+L^\perp$.

Now assume that $C+L^\perp$ is closed and let $\{x_k\}$ be such that $x_k\in \Pi_L(C)$ for all $k\in \N$ and $x_k\to \bar x$. For every $k\in \N$ there is some $y_k\in C$ such that $x_k = \Pi_L(y_k)$. We hence have
$$
x_k = y_k + (x_k-y_k) = y_k + (\Pi_L(y_k)-y_k) \in C+L^\perp.
$$
Since $C+L^\perp$ is closed, we have $\bar x = \bar y + \bar z$ with  $\bar y \in C$, $\bar z\in L^\perp$. Then $\bar x = \Pi_L(\bar y) \in \Pi_L(C)$, so $\Pi_L(C)$ is closed.  
\end{proof}

\begin{proposition}\label{prop:no-need-to-check} Let $K\subseteq \R^n$ be a cone, and assume that $K$ is facially exposed. Then for every $F\lhd K$ such that $F = \cone\{p_1,p_2\}$, where $p_1,p_2\in \R^n$ are linearly independent, the set $K^*+F^\perp$ is closed.
\end{proposition}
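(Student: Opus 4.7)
The plan is to reduce closedness of $K^* + F^\perp$ to closedness of the orthogonal projection $\Pi_{\E^*/L^\perp}(K^*)$, where $L := \lspan F$, and then to identify this projection explicitly using exposing functionals produced by facial exposedness of $K$. Note that $F^\perp = L^\perp$ and $K^* + F^\perp$ is $L^\perp$-saturated, so standard quotient-topology facts give that $K^* + F^\perp$ is closed in $\E^*$ if and only if $\Pi_{\E^*/L^\perp}(K^*)$ is closed in $L \cong \E^*/L^\perp$. The inclusion $\Pi_{\E^*/L^\perp}(K^*) \subseteq F|_L^*$ always holds, since every $s \in K^*$ is nonnegative on $F \subseteq K$.

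For the reverse inclusion I would extract two exposing functionals. Since $F = \cone\{p_1, p_2\}$ is two-dimensional with linearly independent generators, its only proper faces are $\{0\}$, $\cone\{p_1\}$, and $\cone\{p_2\}$, and all three are faces of $K$ by Proposition~\ref{prop:ff}(i). Facial exposedness of $K$ then yields $u_1, u_2 \in K^*$ with $K \cap u_i^\perp = \cone\{p_i\}$, hence $\iprod{u_i}{p_i} = 0$ and $\iprod{u_i}{p_j} > 0$ for $j \neq i$ (using linear independence of $p_1, p_2$ and $p_j \in K \setminus \cone\{p_i\}$). Writing $v_i := \Pi_{\E^*/L^\perp}(u_i)$ and using that the orthogonal projection onto $L$ preserves inner products with elements of $L$, each $v_i$ is a nonzero element of $F|_L^*$ that annihilates $p_i$; hence $v_1$ and $v_2$ generate the two extreme rays of the two-dimensional wedge $F|_L^*$.

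By convexity of $\Pi_{\E^*/L^\perp}(K^*)$ we then conclude $F|_L^* = \cone\{v_1, v_2\} \subseteq \Pi_{\E^*/L^\perp}(K^*)$, and combined with the reverse inclusion this gives $\Pi_{\E^*/L^\perp}(K^*) = F|_L^*$, which is closed. The main delicate point is the extraction of $u_1$ and $u_2$: it is essential that $\dim F = 2$ restricts the proper subfaces of $F$ to exactly two one-dimensional rays, so that the entire relative boundary of the two-dimensional cone $F|_L^*$ is realized by projections of functionals in $K^*$. In higher dimension one would need exposing functionals matching every direction of the relative boundary of $F|_L^*$, which is precisely where the gap between facial exposedness and facial dual completeness typically arises.
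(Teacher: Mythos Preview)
Your proof is correct and follows essentially the same approach as the paper: obtain exposing functionals $u_1,u_2\in K^*$ for the two extreme rays $\cone\{p_1\},\cone\{p_2\}$ of $F$ (via transitivity of faces and facial exposedness of $K$), project them onto $L=\lspan F$, and observe that these projections generate the two extreme rays of the two-dimensional wedge $F|_L^*$, whence $\Pi_{\E^*/L^\perp}(K^*)=F|_L^*$ is closed. The paper carries this out in polar notation and by explicitly decomposing an arbitrary $y\in F^\circ$ as $\alpha h_1+\beta h_2+(\text{element of }F^\perp)$, whereas you phrase it through the projection/quotient equivalence; the underlying argument is the same.
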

\begin{proof} Since $K$ is facially exposed, the faces $E_1 = F\cap \lspan p_1$ and $E_2 = F\cap \lspan p_2$ are exposed. Therefore, there are normals  $h_1,h_2\in \R^n$ such that
\begin{equation}\label{eq:normals005}
\langle h_i, p_i\rangle = 0,\qquad \langle h_i, x\rangle <0 \quad \forall x\in K\setminus E_i, \; i\in \{1,2\}.
\end{equation}
Observe that $h_1,h_2\notin F^\perp$ (since they expose proper faces of $F$). Hence,
$$
g_i:= \Pi_{\lspan F} (h_i)\neq 0 \qquad \forall i \in \{1,2\}.
$$
Moreover,
\begin{equation}\label{eq:equalityzero}
\langle g_i,p_i\rangle =  \langle g_i-h_i,p_i\rangle+\langle h_i,p_i\rangle = 0\; \forall i \in \{1,2\},
\end{equation}
since $g_i-h_i\in F^\perp$, and 
$$
\langle g_i,x\rangle = \langle h_i,x\rangle<0 \quad \forall x\in F\setminus E_i, i\in \{1,2\} .
$$
Observe that an $x\in \lspan F$ can be represented as
$$
x = \alpha p_1+\beta p_2, \qquad \alpha, \beta\in \R,
$$
with $\alpha,\beta\geq 0$ if and only if $x\in F$.
We have from \eqref{eq:equalityzero}
$$
\langle x, g_1\rangle = \alpha \langle p_1, g_1\rangle + \beta \langle p_2, g_1\rangle = \beta \langle p_2, g_1\rangle,
\qquad 
\langle x, g_2\rangle = \alpha \langle p_1, g_2\rangle + \beta \langle p_2, g_2\rangle=\alpha  \langle p_1, g_2\rangle.
$$
It follows from these relations that $\alpha\geq 0$ if and only if $\langle x, g_1\rangle \leq 0$ and $\beta \geq 0 $ if and only if $\langle x, g_2\rangle \leq 0$.
We have the representation
$$
F = \{x\in \R^n \,:\, \langle x, g_1\rangle \leq 0, \; \langle x, g_2\rangle \leq 0\}\cap \lspan F.
$$
For the dual face we have
$$
F^* = -\cl\cone \{g_1,g_2\}+F^\perp = -\cone \{g_1, g_2\}+F^\perp,
$$
hence, for any $y\in F^* $ we have
$$
y = -\alpha g_1-\beta g_2 + u,
$$
where $\alpha,\beta \in \R_+$ and $u\in F^\perp$.
We can rewrite this as
$$
y = -\alpha g_1-\beta g_2 + u = -\alpha h_1-\beta h_2 + (\alpha (h_1-g_1)+\beta(h_2-g_2)+u),
$$
where $\alpha (h_1-g_1)+\beta(h_2-g_2)+u\in F^\perp$, and since $h_1,h_2\in -K^*$, we have  $y\in K^*+F^\perp $. By the arbitrariness of $y$ this yields $F^*\subset K^* +F^\perp$. Together with $F^* = \cl (K^* +F^\perp)$ this yields $ K^* +F^\perp = \cl (K^* +F^\perp)$.

\end{proof}

\begin{proposition}[Pataki criterion]\label{prop:Pataki} If a face $F\lhd K$ is such that all proper minimal faces of $F^*$ are exposed, then $F^\perp+K^*$ is closed. 
\end{proposition}
\begin{proof}
This follows directly from Theorem~2 and the proof of Theorem~3 in \cite{PatakiFexpNice}.
\end{proof}

\begin{proposition}\label{prop:conichull} Let $S\subset \R^n$ be such that $S$ is compact and can be strictly separated from zero. Then $\cone S $ is a closed convex cone.
\end{proposition}
\begin{proof} If $\cone S$ is not closed, then there must be a sequence $\{y_k\}$ such that $y_k\in K$ for all $k\in \N$ and $y_k \to y \notin K$. Therefore for each $k\in \N$ we have 
$$
y_k = \sum_{i =1}^{p_k} \alpha_k^i x_k^i, \quad \sum_{i=1}^{p_k}\alpha^i_k =1,\quad  \alpha_k^i \geq 0 \;\forall i \in \{1, \dots, p_k\}, \quad p_k \leq n+1. 
$$ 
\end{proof}

\begin{proposition}[Properties of the cone $K$ from Example~\ref{eg:cubic}]\label{prop:eg2} Let $K := \cone \{C\times \{1\}\}$, where $C := \conv\{\gamma_1,\gamma_2\}$, $\gamma_1(s)  = (-s, -s^2, -s^3), \, s\in [0,1]$ and $\gamma_2(t)  = (-t, t^2, 0),\;t\in [0,1/3 (2  + \sqrt{7} )]$. The closed convex cone $K$ is
\begin{itemize}
\item facially exposed;
\item tangentially exposed;
\item not strongly tangentially exposed;
\item not FDC.
\end{itemize}
\end{proposition}
\begin{proof} To verify that $K$ is facially and tangentially exposed by Proposition~\ref{prop:explift} it is sufficient  to show that $C$ satisfies these properties. 

\textbf{To show facial exposure}, first consider the  parametric families of compact convex sets
$$
F_{11}(s) = [0,\gamma_1(s)], \; s\in (0,1], \quad F_{22}(s) = [\gamma_1(s),\gamma_2(\varphi(s))], \, s\in (0,1],
$$
where $\varphi(s) = 1/3 (2  + \sqrt{7} )s$, and 
$$
F_1 =  \conv \{0, \gamma_1(1), \gamma_2(\varphi(1))\}, \qquad  F_2 = \conv \{\gamma_2\}.
$$
To show that these sets are exposed one- and two-dimensional faces of $C$,  it is sufficient to demonstrate that for each of these faces there exists a corresponding exposing hyperplane. This is a straightforward exercise in analysis, which we omit for brevity.

It is evident that $ \gamma_1\cup \gamma_2 \subseteq \ext C$, since all points in $\gamma_1\cup \gamma_2$ are subfaces of the higher dimensional faces listed above. All these zero-dimensional faces are exposed by Proposition~\ref{prop:expint}.
 
It is evident from the diagram in Fig.~\ref{fig:sphere} that the relative interiors of all faces that we came across so far can be mapped homeomorphically to a sphere,
\begin{figure}[ht]
{\centering
\includegraphics[scale=1]{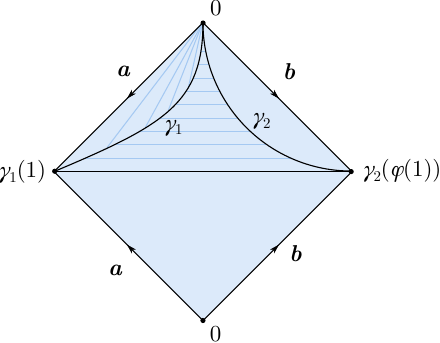}\\
}
\caption{Boundary of $C$  identified with the unit sphere}
\label{fig:sphere}
\end{figure}
therefore, by Proposition~\ref{prop:relint}, there are no proper faces of the set $C$ other than the listed exposed faces.

\textbf{Tangential exposure} needs to be verified for two-dimensional faces only due to Proposition~\ref{prop:twodim}. We only have two such faces, $F_1$ and $F_2$.

For the triangular face $F_{1}$ observe that all of its one-dimensional faces are exposed, hence the relevant normals project onto the normals at the points on these faces in the two-dimensional span of the face. The normals at the corner points are obtained as the convex hulls of these projections.

For the top face $F_2 = \conv \gamma_2$ the selection of the normals and the verification of the projections is a straightforward technical exercise.

\textbf{To show that the second-order tangential exposure is broken}  (and in fact the tangent cone is not even facially exposed), consider the tangent to the set $C$ at $0$. We have
$$
\mT(0;C) = \Limsup_{t\to \infty} t C =\cl\cone \{\gamma_1\cup\gamma_2\}.
$$
We scale our curves for convenience to obtain
$$
\kappa_1(s) = (-1, -s, -s^2),  \quad \kappa_2(t) = (-1,t,0).
$$

We hence have a slice of our tangent cone given by
$$
\conv \{(-s,-s^2), s\in [0,1], (-1,t,0), t\in [0, \varphi(1)]\},
$$
see Fig.~\ref{fig:second-order-cubic-fail}. It is clear that the set has an unexposed face $\{(0,0)\}$.

\textbf{To show that the cone $K= \cone \{C\times \{1\}\}$ is not FDC}, we explicitly identify a parametrised family of points in the sum $K^*+F^\perp$ whose limit does not belong to this set.  Let
$$
p(s) = \left(2(\sqrt{7}+1)s, (5-\sqrt{7}),0, (\sqrt{7}+3)s^2 \right).
$$
We will show that $p(s) \in K^* + F^\perp$ for $F=\cone \{F_2\times\{1\}\}$, however, $p(s)\to \bar p\notin K^* +F^\perp$.

For the first relation, observe that $F^\perp = \lspan \{(0,0,1,0)\}$, and therefore
$$
r(s) := (0,0,\frac{4}{s},0)\in F^\perp.
$$
Hence, $p(s) = q(s) + r(s)$, where $r(s) \in F^\perp$, and we will next show that $q(s) \in K^*$.

We have explicitly
$$
q(s) = \left(2(\sqrt{7}+1)s, (5-\sqrt{7}),-4/s, (\sqrt{7}+3)s^2 \right).
$$

Abusing the notation and denoting by $\gamma_1$ the lifted version of the relevant curve, we have
\begin{align*}
\langle \gamma_1(u),q(s)\rangle
& = (\sqrt{7}+3 + 4 \frac{u}{s})(u-s)^2>0
\end{align*}
when $u\neq s$, also for $\gamma_2$ substituting $\varphi(u) =1/3 (2  + \sqrt{7} )u $,
\begin{align*}
\langle \gamma_2(\varphi(u)),q(s)\rangle
& = (3+\sqrt{7})(u-s)^2, \end{align*}
which is greater than zero unless $u=s$. We have hence shown that the point $q(s)$ is in the dual cone.

Let
$$
\bar p  = \lim_{s\downarrow 0} p(s) = (0,5-\sqrt{7},0,0),
$$
then
$$
\langle \bar p , \gamma_1(s) \rangle = (\sqrt{7}-5)s<0,
$$
and hence $\bar p\notin K^*$.

\end{proof}

\begin{proposition}[Properties of the cone $K$ from Example~\ref{eg:lost-tangent}]\label{prop:eg3} Let $K := \cone \{C\times \{1\}\}$, where $C := \conv\{\gamma_1,\gamma_2\}$, $\gamma_1(t) = (\cos t, \sin t, 1), \; t\in [0,\pi/2]$, $ \gamma_2(t) = (\cos t, \sin t, -1)\; t\in [0,\pi]$. The closed convex cone $K$ is
\begin{itemize}
\item facially exposed;
\item not strongly tangentially exposed;
\item FDC.
\end{itemize}
\end{proposition}
\begin{proof}

\textbf{To prove that the cone $K$ is facially exposed}, we use the same techniques as in the proof of Proposition~\ref{prop:eg2}.

The two-dimensional faces of $C$ are
$$
F_1 = \conv\{\gamma_1\}, \quad F_2 = \conv\{\gamma_2\}, \quad F_3 = \conv \{\gamma_1(0), \gamma_2(0), \gamma_2(\pi)\}, \quad F_4 = \conv\{\gamma_1(0), \gamma_1(\pi/2), \gamma_2(\pi)\};
$$
the one-dimensional faces are the line segments connecting $\gamma_1$ and $\gamma_2$,
$$
F_{11}(t) = \conv\{\gamma_1(t), \gamma_2(t)\}, \; t\in [0,\pi/2]; \quad F_{12}(t) = \conv\{\gamma_1(\pi/2), \gamma_2(t)\},\; t\in (\pi/2,\pi];
$$
and the remaining intersections of the two-dimensional faces,
$$
F_{13} = \conv\{\gamma_1(0), \gamma_1(\pi/2)\}, \quad F_{14} = \conv\{\gamma_2(0), \gamma_2(\pi)\}, \quad F_{15} = \conv\{\gamma_1(0), \gamma_2(\pi)\}.
$$

It is a technical exercise to verify that the two-dimensional faces $F_i$, $i\in \{1,\dots, 4\}$ are exposed by the hyperplanes that correspond to the following half-spaces that contain $C$,
$$
\langle  (0,0,1) \cdot \rangle\leq 1,\qquad 
\langle (0,0,-1), \cdot \rangle\leq 1,\qquad 
\langle (-1,-1,1), \cdot \rangle\leq 0,\qquad
\langle (0,-1,0), \cdot\rangle\leq 0.
$$
This also proves that the one-dimensional faces $F_{13}$, $F_{14}$, $F_{15}$ are exposed, by Proposition~\ref{prop:expint}.
The remaining families of one-dimensional faces $F_{11}$ and $F_{12}$ are exposed by the following two families of half-spaces and relevant hyperplanes,
$$
\langle (\cos t,\sin t,0), \cdot \rangle  \leq 1\,:\, t\in [0,\pi/2],
$$
$$
\langle (\cos \tau ,\sin \tau, \frac{1-\sin \tau}{2}), \cdot \rangle \leq \frac{1+\sin \tau}{2}, \tau\in (\pi/2,\pi].
$$

It is evident from using the same argument as in the proof of Proposition~\ref{prop:eg2} and invoking Proposition~\ref{prop:relint} together with the facial topology shown in Fig.~\ref{fig:sphere3},
\begin{figure}[ht]
{\centering
\includegraphics[scale=1]{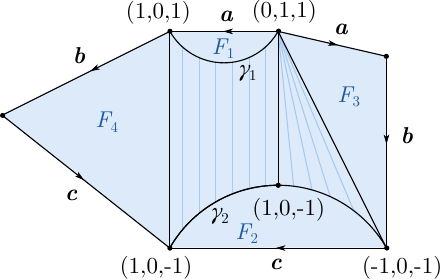}\\
}
\caption{Boundary of $C$  identified with the unit sphere}
\label{fig:sphere3}
\end{figure}
that the listed one- and two-dimensional faces together with their zero-dimensional intersections along the curves $\gamma_1$ and $\gamma_2$ comprise all nonempty proper faces of the set $C$. The exposure of the zero-dimensional faces follows from Proposition~\ref{prop:expint}.

\textbf{To prove that the cone $K$ is FDC} we begin with computing the polar cone explicitly. We can do this from the half-space description obtained earlier and using Propositions~\ref{prop:boundary} and~\ref{prop:polarlift}. The dual cone $K^*$ for $K$ is
\begin{align*}
K^\circ =  \cone \{& \{(-\cos t,-\sin t,0,1)\,:\, t\in [0,\pi/2]\},\\
& \left\{(-\cos \tau ,-\sin \tau, \frac{\sin \tau-1}{2},\frac{1+\sin \tau}{2}), \tau\in (\pi/2,\pi] \right\},\\
&(0,0,-1,1),
(0,0,1,1),
(1,1,-1,0),
(0,1,0,0)\}.
\end{align*}

To check whether $K$ is facially dual complete, it remains to consider all possible sums $F^\perp + K^*$ for orthogonal complements of faces of $K$ and see if these sets are closed.

Notice that whenever the face $F$ is one-dimensional, its orthogonal complement is a three-dimensional subspace. Its sum with any closed cone is closed, since the relevant one-dimensional projection of a closed cone is closed. By Proposition~\ref{prop:no-need-to-check}  all two-dimensional faces of $K$ also verify the closedness condition.

Due to our observation about one-dimensional faces and Proposition~\ref{prop:no-need-to-check} to prove that the cone $K = \cone \{C\times \{1\}\}$ is FDC we only need to check the closedness of $F^\perp+K^*$ for the three-dimensional faces of $K$ (that correspond to the two dimensional faces of $C$ shown in Fig~\ref{fig:twodfaces}).%
\begin{figure}[ht]
\centering
\includegraphics[scale=1]{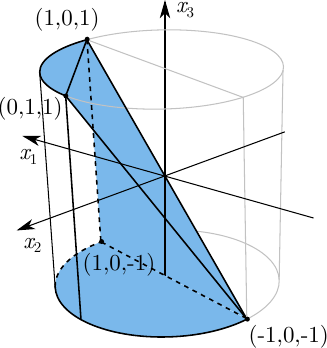}\\
\caption{Two dimensional faces of $C$}
\label{fig:twodfaces}
\end{figure}

For the three-dimensional faces of $K$ that correspond to the top and bottom faces $F_{11}$ and $F_{12}$ of the set $C$, we use Proposition~\ref{prop:projection} to reduce checking that the sum $F^\perp+ K^*$ is closed to checking that $\Pi_{\lspan F^\perp} K^*$ is closed.

To compute the projections we use a coordinate transformation that rotates the space so that $F^\perp$ coincides with $\lspan (0,0,0,1)$. This allows us to obtain a three-dimensional graphic representation of the projection for each case.

We use the representation $K^* = \cone S$, where
$$
S = S_1 \cup S_2 \cup S_3,
$$
\begin{align*}
S_1 & = \{(-\cos t,-\sin t,0,1)\,:\, t\in [0,\pi/2]\},\\
S_2 & = \left\{(-\cos \tau ,-\sin \tau, \frac{\sin \tau-1}{2},\frac{1+\sin \tau}{2}), \tau\in [\pi/2,\pi] \right\},\\
S_3 & = \{(0,0,-1,1),
(0,0,1,1),
(1,1,-1,0),
(0,1,0,0),
(0,0,0,1)\}.
\end{align*}

For the top face we have the corresponding face $F'_{11} = \cone \{F_{11}\times \{1\}\} = \cone \{\gamma_1\times \{1\}\}\lhd K$, and so
$$
\lspan {F'_{11}}  = \lspan \{(1,0,1,1),(0,1,1,1),(0,0,1,1)\}, \qquad {F'_{11}}^\perp = \lspan (0,0,1,-1).
$$

It is a technical exercise in linear algebra to verify that $U(F'_{11}) =  \cone S'$, where  $S' = \{S'_1,S'_2,S'_3\}$,
\begin{align*}
S_1'
 & = \left\{\{(-\cos t,-\sin t,1/\sqrt{2})\,:\, t\in [0,\pi/2]\}\right\},\\
S_2' & = \left\{(-\cos \tau ,-\sin \tau, 1/\sqrt{2}\sin \tau), \tau\in [\pi/2,\pi] \right\},\\
S_3'
 & = \left\{(0,0,\sqrt{2}),
  (1,1,-1/\sqrt{2}),
  (0,1,0),
  (0,0,1/\sqrt{2})\right\}.
\end{align*}

To show that $U(F'_{11})$ is closed, we use Proposition~\ref{prop:conichull}. It is easy to see that for $w = (1,1,z)$, where $z\in (2,2\sqrt{2})$, we have 
$$
\langle w,x\rangle >0 \quad \forall x\in S'.
$$

For the bottom face $F_{12}$ we have $F'_{12} = \cone \{\gamma_1\times \{1\}\}$, and the relevant linear subspaces are
$$
\lspan {F'_{12}}  = \lspan \{(1,0,1,-1),(0,1,1,-1),(0,0,1,-1)\}, \quad {F'_{12}}^\perp = \lspan (0,0,1,1).
$$
After computing the relevant unitary transformation $U$, the projection is a three dimensional set $U(F'_{12})= \cone S'$, where  $S' = \{S'_1,S'_2,S'_3\}$,
\begin{align*}
S_1'
 & = \left\{\{(\cos t,\sin t,1/\sqrt{2})\,:\, t\in [0,\pi/2]\}\right\},\\
S_2' & = \left\{(\cos \tau ,\sin \tau, 1/\sqrt{2}), \tau\in [\pi/2,\pi] \right\},\\
S_3'
 & = \left\{(0,0,\sqrt{2}),
  (-1,-1,1/\sqrt{2}),
  (0,-1,0),
  (0,0,1/\sqrt{2})\right\}.
\end{align*}

For $w = (0,y,-1)$, where $y\in (0,1/\sqrt{2})$, it is easy to check that $\langle w,x\rangle <0$ for all points in $S'$, and hence, by Proposition~\ref{prop:conichull} the set $\cone S'$ is closed. 

The remaining triangular faces satisfy Proposition~\ref{prop:Pataki}: since the triangular faces are polyhedral, their duals are also polyhedral, and have all their proper faces exposed.

\end{proof}

\end{document}